\theoremstyle{plain}
\newtheorem {Lem}{Lemma}
\newtheorem {The}{Theorem}
\newtheorem {Cor}{Corollary}
\newtheorem {Prob}{Problem}
\theoremstyle{remark}
\theoremstyle{definition}
\newcommand{\GL}{\operatorname{GL}}
\newcommand{\SL}{\operatorname{SL}}
\newcommand{\Sp}{\operatorname{Sp}}
\newcommand{\map}{\longrightarrow}
\newif\ifcomm
\let\ifcomm\iffalse
\newcommand\supb[1]{{}^{\fboxsep1pt\fbox{$\scriptstyle\mskip1mu#1\mskip-1mu$}}}
\def\a{\alpha}
\def\b{\beta}
\def\g{\gamma}
\def\A{\operatorname{A}}
\def\B{\operatorname{B}}
\def\C{\operatorname{C}}
\def\F{\operatorname{F}}
\def\G{\operatorname{G}}
\def\K{\operatorname{K}}
\def\rk{\operatorname{rk}}
\def\sr{\operatorname{sr}}
\def\dim{\operatorname{dim}}
\def\Max{\operatorname{Max}}
\title[Generation of relative commutator subgroups]{Generation of relative commutator subgroups\\ in Chevalley groups}
\author{R.~Hazrat}
\address{
Centre for Research in Mathematics\\
University of Western Sydney\\
Australia}
\email{r.hazrat@uws.edu.au}
\author{N.~Vavilov}
\address{Department of Mathematics and Mechanics\\
St.~Petersburg State University\\ St.~Petersburg, Russia}
\email{nikolai-vavilov@yandex.ru}
\author{Z.~Zhang}
\address{Department of  Mathematics\\
 Beijing Institute of Technology\\
 Beijing, China}
\email{zuhong@gmail.com}
\begin{document}

\begin{abstract}
Let $\Phi$ be a reduced irreducible root system of rank $\ge 2$,
let $R$ be a commutative ring and let $I,J$ be two ideals of $R$.
In the present paper we describe generators of the commutator
groups of relative elementary subgroups $\big[E(\Phi,R,I),E(\Phi,R,J)\big]$
both as normal subgroups of the elementary Chevalley group $E(\Phi,R)$,
and as groups. Namely, let $x_{\a}(\xi)$, $\a\in\Phi$, $\xi\in R$,
be an elementary generator of $E(\Phi,R)$. As a normal subgroup
of the absolute elementary group $E(\Phi,R)$, the relative elementary
subgroup is generated by $x_{\a}(\xi)$, $\a\in\Phi$, $\xi\in I$.
Classical results due to Michael Stein, Jacques Tits and Leonid
Vaserstein assert that {\it as a group\/} $E(\Phi,R,I)$ is generated
by $z_{\a}(\xi,\eta)$, where $\a\in\Phi$, $\xi\in I$, $\eta\in R$.
In the present paper, we prove the following {\it birelative\/} analogues
of these results. As a normal subgroup of $E(\Phi,R)$ the relative
commutator subgroup $\big[E(\Phi,R,I),E(\Phi,R,J)\big]$ is generated
by the following three types of generators:
i) $\big[x_{\alpha}(\xi),z_{\alpha}(\zeta,\eta)\big]$,
ii) $\big[x_{\alpha}(\xi),x_{-\alpha}(\zeta)\big]$, and iii)
$x_{\alpha}(\xi\zeta)$, where $\alpha\in\Phi$, $\xi\in I$,
$\zeta\in J$, $\eta\in R$. As a group, the generators are essentially
the same, only that type iii) should be enlarged to
iv) $z_{\alpha}(\xi\zeta,\eta)$. For classical groups, these results,
with much more computational proofs, were established in previous
papers by the authors. There is already an amazing application of
these results, namely in the recent work of Alexei Stepanov~\cite{stepanov10} on
relative commutator width.
\par\smallskip\noindent

\end{abstract}

\maketitle


\section*{Introduction}

Let $\Phi$ be a reduced irreducible root system of rank $\ge 2$,
let $R$ be a commutative ring with 1, and let $G(\Phi,R)$ be a
Chevalley group of type $\Phi$ over $R$. For the background on
Chevalley groups over rings see \cite{NV91} or \cite{vavplot},
where one can find many further references. We fix a split maximal
torus $T(\Phi,R)$ in $G(\Phi,R)$ and consider root unipotents
$x_{\alpha}(\xi)$
elementary with respect to $T(\Phi,R)$. The subgroup $E(\Phi,R)$
generated by all $x_{\alpha}(\xi)$, where $\alpha\in\Phi$,
$\xi\in R$, is called the [absolute] elementary subgroup of
$G(\Phi,R)$.
\par
Now, let $I\unlhd R$ be an ideal of $R$. Then the relative
elementary subgroup $E(\Phi,R,I)$ is defined as the {\it normal\/}
subgroup of $E(\Phi,R)$, generated by all elementary root unipotents
$x_{\alpha}(\xi)$ of level $I$,
$$ E(\Phi,R,I)=
{\big\langle x_{\alpha}(\xi)\mid \alpha\in\Phi,\
\xi\in R\big\rangle}^{E(\Phi,R)}. $$
\noindent
In other words, as a normal subgroup of $E(\Phi,R)$, the relative
elementary subgroup $E(\Phi,R,I)$ is generated by $x_{\alpha}(\xi)$,
where $\alpha\in\Phi$, $\xi\in I$.
\par
A starting point of the present work is the following classical
result. Morally, it goes back to the Thesis of Michael Stein
\cite{stein2}, which contains all calculations necessary
for its proof. In a slightly weaker form it was first stated
by Jacques Tits \cite{tits}. Namely, Proposition 1 there
asserts that $E(\Phi,R,I)$ is generated by its intersections with
the fundamental $\SL_2$'s. However, the earliest reference for
the precise form below, we could trace, is the work by Leonid
Vaserstein \cite{vaser86}, Theorem~2.

\begin{The}[{Stein---Tits---Vaserstein}]\label{The:1}
Let $\Phi$ be a reduced irreducible root system of rank $\ge 2$
and let $I$ be an ideal of a commutative ring $R$. Then
as a group\/ $E(\Phi,R,I)$ is generated by the elements of
the form
$$ z_{\alpha}(\xi,\eta)=
x_{-\alpha}(\eta)x_{\alpha}(\xi)x_{-\alpha}(-\eta), $$
\noindent
where\/ $\xi\in I$ for $\alpha\in\Phi$, while\/ $\eta\in R$.
\end{The}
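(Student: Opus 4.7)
Let $H$ denote the subgroup of $E(\Phi,R)$ generated by all elements $z_\alpha(\xi,\eta)$ with $\alpha\in\Phi$, $\xi\in I$ and $\eta\in R$. Each such generator is a conjugate of $x_\alpha(\xi)$, with $\xi\in I$, by the element $x_{-\alpha}(\eta)\in E(\Phi,R)$, hence lies in $E(\Phi,R,I)$; thus $H\subseteq E(\Phi,R,I)$. Conversely $z_\alpha(\xi,0)=x_\alpha(\xi)$, so $H$ already contains the set $\{x_\alpha(\xi):\alpha\in\Phi,\,\xi\in I\}$ that generates $E(\Phi,R,I)$ as a normal subgroup of $E(\Phi,R)$. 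Proving the theorem therefore amounts to proving the opposite inclusion $E(\Phi,R,I)\subseteq H$.

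The strategy I would follow is the one signalled in the excerpt via Tits's Proposition~1. Write $G_\alpha=\langle x_\alpha(R),x_{-\alpha}(R)\rangle$ for the fundamental rank-one subgroup attached to the root $\alpha$. Tits's result --- whose proof relies on the hypothesis $\operatorname{rk}(\Phi)\ge 2$ in order to coordinate the various rank-one subgroups $G_\alpha$ through the rank-two subsystems, via iterated applications of the Chevalley commutator formula --- asserts that, as a group, $E(\Phi,R,I)$ is generated by its intersections $G_\alpha\cap E(\Phi,R,I)$, $\alpha\in\Phi$. Since each $z_\alpha(\xi,\eta)$ and each $z_{-\alpha}(\xi,\eta)$ sits inside $G_\alpha\cap E(\Phi,R,I)$, it suffices to verify the following rank-one statement: for every $\alpha\in\Phi$ the intersection $G_\alpha\cap E(\Phi,R,I)$ --- the elementary congruence subgroup at level $I$ of the fundamental $\SL_2$-subgroup attached to $\alpha$ --- is generated, as a group, by the two families $z_\alpha(\xi,\eta)$ and $z_{-\alpha}(\xi,\eta)$ with $\xi\in I$ and $\eta\in R$.

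The main obstacle is exactly this rank-one statement, which is intrinsic to $\SL_2(R)$ and affords no help from the Chevalley commutator formula. My plan is to start from a generic element of $G_\alpha\cap E(\Phi,R,I)$, written as a product of conjugates of $x_{\pm\alpha}(\xi)$ ($\xi\in I$) by words in $x_{\pm\alpha}(R)$ in alternating normal form, and to rewrite each conjugate from the outside in by peeling off outer letters one at a time. An outer $x_{-\alpha}(\eta)$ sitting on a block beginning with $x_\alpha(\xi)$ is absorbed by the defining identity $x_{-\alpha}(\eta)x_\alpha(\xi)x_{-\alpha}(-\eta)=z_\alpha(\xi,\eta)$, and symmetrically for the opposite sign; adjacent letters of the same sign are merged by additivity $x_{\pm\alpha}(\eta)x_{\pm\alpha}(\eta')=x_{\pm\alpha}(\eta+\eta')$. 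The delicate point is that a sign change between consecutive peelings leaves behind an $\SL_2$-commutator $[x_\alpha(\cdot),x_{-\alpha}(\cdot)]$ of $R$-valued parameters, which is not itself a product of $z$-generators; taming this residue --- by reordering the peelings and by exploiting that the ambient element lies in $E(\Phi,R,I)$, so the accumulated residue is forced to have level $I$ --- constitutes the technical core, in the spirit of Vaserstein~\cite{vaser86}. Once established, the rank-one statement combines with Tits's reduction to yield the theorem.
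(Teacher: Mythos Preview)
The paper does not supply its own proof of Theorem~\ref{The:1}; it is quoted as a classical result of Stein, Tits and Vaserstein (the precise form being Theorem~2 of \cite{vaser86}) and is then used as input in the proofs of Theorems~\ref{The:2} and~\ref{The:3}. There is therefore nothing in the paper to compare your argument against directly.

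Your strategy, however, contains a genuine gap. After invoking Tits's reduction you attempt to prove the ``rank-one statement'' that $G_\alpha\cap E(\Phi,R,I)$ is generated by $z_{\pm\alpha}(\xi,\eta)$, and you propose to do this by a computation internal to $\SL_2$. But that internal statement is false in general. The intersection $G_\alpha\cap E(\Phi,R,I)$ always contains the normal closure of $x_{\pm\alpha}(I)$ in $G_\alpha$ (the relative elementary group of the fundamental $\SL_2$ at level $I$), and the claim that \emph{this} group equals $\langle z_{\pm\alpha}(\xi,\eta)\rangle$ is precisely the rank-one instance of Theorem~\ref{The:1}, which does not hold for arbitrary commutative $R$. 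This is exactly why the theorem carries the hypothesis $\rk(\Phi)\ge 2$: one cannot complete the argument inside a single copy of $\SL_2$. The ``delicate point'' you flag is therefore not a technical nuisance to be tamed but an actual obstruction; the residual $\SL_2$-commutator you describe need not lie in $\langle z_{\pm\alpha}\rangle$ even when it visibly has level $I$.

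The route taken in \cite{vaser86} (and implicit in \cite{stein2}) avoids this trap by never isolating a rank-one problem. One shows directly that the group $H$ generated by all $z_\alpha(\xi,\eta)$ is normalised by every $x_\beta(\eta)\in E(\Phi,R)$; since $H$ already contains each $x_\alpha(\xi)$ with $\xi\in I$, normality forces $E(\Phi,R,I)\subseteq H$. The verification for $\beta=-\alpha$ is trivial, and for $\beta\neq\pm\alpha$ it follows from the Chevalley commutator formula (compare the parabolic argument in \S\ref{sec:3}). The case $\beta=\alpha$ is handled by using an auxiliary root available only because $\rk(\Phi)\ge 2$ --- for instance, expressing $x_\alpha(\xi)$ as a commutator of root elements for roots summing to $\alpha$ --- thereby reducing to the already-settled cases rather than to an $\SL_2$ computation.
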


In the present paper, we delve a bit deeper into the structure of
relative $K_1$'s, proving a {\it birelative\/} version of the
above result. Namely, we determine elementary generators of
the mixed commutator subgroups $[E(\Phi,R,I),E(\Phi,R,J)]$
of two relative elementary subgroups $E(\Phi,R,I)$ and
$E(\Phi,R,J)$, corresponding to ideals $I,J\unlhd R$.
\par
It is easy to prove that, apart from the known small exceptions,
one has
$$ E(\Phi,R,IJ)\le\big [E(\Phi,R,I),E(\Phi,R,J)\big]\le G(\Phi,R,IJ), $$
\noindent
where $G(\Phi,R,I)$ denotes the principal congruence subgroup
of $G(\Phi,R)$, of level $I$, see Theorem 4 below for a precise
statement.
\par
Thus, in particular, $[E(\Phi,R,I),E(\Phi,R,J)]$ contains all
elements $z_{\alpha}(\xi\zeta,\eta)$, where $\alpha\in\Phi$,
$\xi\in I$, $\zeta\in J$, $\eta\in R$. In some cases, for
instance when $I$ and $J$ are comaximal, $I+J=R$, one can
prove that
$$ [E(\Phi,R,I),E(\Phi,R,J)]=E(\Phi,R,IJ). $$
\noindent
However, easy examples show that in general the mixed commutator
subgroup can be strictly larger than $E(\Phi,R,IJ)$ even for
very nice 1-dimensional rings, see \cite{MAS3,Mason74,MAS1}. Since
$E(\Phi,R,IJ)$ is already normal in $E(\Phi,R)$, one needs
{\it additional\/} generators to span $[E(\Phi,R,I),E(\Phi,R,J)]$
as a subgroup, and even as a normal subgroup of $E(\Phi,R)$.
\par
In the present paper we display the missing generators. As in
the case of the relative elementary subgroups themselves, these
generators sit in the fundamental $\SL_2$'s and are in fact
commutators of {\it some\/} elementary generators of $E(\Phi,R,I)$
and $E(\Phi,R,J)$.
\par
Now we are in a position to state the main results of the present
paper, which provide such sets of generators of
$[E(\Phi,R,I),E(\Phi,R,J)]$ as a normal subgroup of $E(\Phi,R)$,
and as a group, respectively.

\begin{The}\label{The:2}
Let $\Phi$ be a reduced irreducible root system of rank $\ge 2$ and $I$, $J$ be two ideals of a commutative ring $R$.
In the cases\/ $\Phi=\C_2,\G_2$ assume that $R$ does not have residue
fields\/ ${\Bbb F}_{\!2}$ of\/ $2$ elements and in the
case\/ $\Phi=\C_l$, $l\ge 2$, assume additionally that any\/
$\theta\in R$ is contained in the ideal\/ $\theta^2R+2\theta R$.
\par

Then as a normal subgroup of the elementary Chevalley group\/ $E(\Phi,R)$,
the mixed commutator subgroup $[E(\Phi,R,I),E(\Phi,R,J)]$
is generated by the the elements of the form
\par\smallskip
$\bullet$ $\big[x_{\alpha}(\xi),z_{\alpha}(\zeta,\eta)\big]$,
\par\smallskip
$\bullet$ $\big[x_{\alpha}(\xi),x_{-\alpha}(\zeta)\big]$,
\par\smallskip
$\bullet$ $x_{\alpha}(\xi\zeta)$,
\par\smallskip\noindent
where $\alpha\in\Phi$, $\xi\in I$, $\zeta\in J$, $\eta\in R$.
\end{The}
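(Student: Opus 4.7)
The inclusion $\supseteq$ is essentially immediate: items (i) and (ii) are literal commutators of elements from $E(\Phi,R,I)$ and $E(\Phi,R,J)$, using Theorem~\ref{The:1} to see that $z_\alpha(\zeta,\eta)\in E(\Phi,R,J)$ when $\zeta\in J$, while item (iii) lies in $E(\Phi,R,IJ)$, which is contained in $[E(\Phi,R,I),E(\Phi,R,J)]$ by the inclusion quoted immediately before the theorem. Writing $H$ for the normal closure in $E(\Phi,R)$ of the three families (i)--(iii), the content of the theorem is the reverse inclusion $[E(\Phi,R,I),E(\Phi,R,J)]\le H$.

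For this, the plan is to invoke Theorem~\ref{The:1} and systematically expand commutators. Theorem~\ref{The:1} presents both $E(\Phi,R,I)$ and $E(\Phi,R,J)$ as abstract groups generated by $z$-elements, so the standard commutator identities $[ab,c]={}^a[b,c]\,[a,c]$ and $[a,bc]=[a,b]\,{}^b[a,c]$, applied repeatedly, reduce any commutator $[g,h]$ with $g\in E(\Phi,R,I)$ and $h\in E(\Phi,R,J)$ to a product of $E(\Phi,R,I)$-conjugates of basic commutators $[z_\alpha(\xi,\eta),z_\beta(\zeta,\theta)]$. Since $H$ is normal in $E(\Phi,R)$, it suffices to show every such basic commutator lies in $H$.

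Fixing $\alpha,\beta$ and using the $E(\Phi,R)$-action (and the Weyl group) to reduce to a canonical configuration, the analysis splits into three cases. When $\alpha=\beta$, the additivity $x_\alpha(\xi)x_\alpha(\xi')=x_\alpha(\xi+\xi')$ combined with the definition of $z_\alpha$ collapses $[z_\alpha(\xi,\eta),z_\alpha(\zeta,\theta)]$ to a type-(i) generator up to conjugation. When $\alpha,\beta$ span a rank-$2$ subsystem with $\beta\neq\pm\alpha$, expansion by the Chevalley commutator formula
$$[x_\alpha(\xi),x_\beta(\zeta)]=\prod_{i,j>0} x_{i\alpha+j\beta}\big(N_{\alpha\beta ij}\,\xi^i\zeta^j\big)$$
produces factors $x_{i\alpha+j\beta}$ whose arguments lie in $IJ$; these are type-(iii) generators, while the cross-terms generated by the conjugating tails $x_{-\alpha}(\eta)$ and $x_{-\beta}(\theta)$ in the $z$'s are treated by the same formula and then reabsorbed. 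The rank-$\ge 2$ hypothesis enters crucially at this step, as it supplies an auxiliary root that allows one to separate and recombine the Chevalley summands inside $H$.

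The principal obstacle is the antiparallel case $\beta=-\alpha$, which is a genuine $\SL_2$-calculation: the Chevalley formula is inapplicable, and one must exhibit $[z_\alpha(\xi,\eta),z_{-\alpha}(\zeta,\theta)]$ as a product of type-(ii) generators, their $E(\Phi,R)$-conjugates, and type-(iii) correction terms coming from commutators of opposite root unipotents. This is precisely where the exceptional hypotheses on $R$ for $\Phi=\C_2,\C_l,\G_2$ are used: the $2$- and $3$-torsion entering the rank-$1$ and $\Sp_4$/$\G_2$ structure constants must be absorbed into $H$, which is possible exactly when every $\theta\in R$ lies in $\theta^2R+2\theta R$ and $R$ has no residue field of order $2$. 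Once this rank-$1$ case is settled, the three cases combine to give $[z_\alpha(\xi,\eta),z_\beta(\zeta,\theta)]\in H$ in every configuration, completing the proof.
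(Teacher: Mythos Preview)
Your overall plan---reduce via Theorem~\ref{The:1} and the commutator identities to basic commutators of $z$-generators, then do a case analysis on the root configuration---matches the paper's, but two of your key diagnoses are off and one step is missing.

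First, you omit the reduction that makes the case analysis tractable. The paper does not attack $[z_{\alpha}(\xi,\eta),z_{\beta}(\zeta,\theta)]$ directly. It writes this as ${}^{x_{-\alpha}(\eta)}\big[x_{\alpha}(\xi),\,{}^{x_{-\alpha}(-\eta)}z_{\beta}(\zeta,\theta)\big]$ and uses normality of $E(\Phi,R,J)$ to re-expand the inner conjugate as a product of $z_{\gamma}(\lambda,\rho)$'s. After another application of (C1)--(C2) one is left with conjugates of $[x_{\alpha}(\xi),z_{\beta}(\zeta,\theta)]$, a strictly easier object. Without this reduction your antiparallel case really is a mess; with it, that case collapses.

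Second, and consequently, you misplace the difficulty. The case $\beta=-\alpha$ is \emph{not} the principal obstacle: after the reduction above,
\[
\big[x_{\alpha}(\xi),z_{-\alpha}(\zeta,\theta)\big]
=\big[x_{\alpha}(\xi),{}^{x_{\alpha}(\theta)}x_{-\alpha}(\zeta)\big]
={}^{x_{\alpha}(\theta)}\big[x_{\alpha}(\xi),x_{-\alpha}(\zeta)\big],
\]
which is already an $E(\Phi,R)$-conjugate of a type~(ii) generator. No hypotheses on $R$ are needed here. The genuinely nontrivial case is $\beta\ne\pm\alpha$ with $\alpha,\beta$ spanning a rank~$2$ subsystem $\Delta$, and the paper does \emph{not} handle it by unwinding the Chevalley commutator formula and ``reabsorbing cross-terms''. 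Instead it observes that $[x_{\alpha}(\xi),z_{\beta}(\zeta,\theta)]$ lies in the unipotent radical $U_{\beta}$ of a minimal parabolic (Lemma~\ref{Lem:7}) and simultaneously in $G(\Delta,R,IJ)$ by the right-most inclusion of Theorem~\ref{The:4}; Lemma~\ref{Lem:5} then forces it into $E(\Delta,IJ)$, i.e.\ a product of type~(iii) generators.

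Third, the exceptional hypotheses on $R$ are not used in the $\subseteq$ direction at all. They enter only through the \emph{left}-most inclusion of Theorem~\ref{The:4}, which is what guarantees that the type~(iii) elements $x_{\alpha}(\xi\zeta)$ actually lie in $[E(\Phi,R,I),E(\Phi,R,J)]$---that is, they are needed for the direction you called ``essentially immediate''. Your attribution of these hypotheses to the antiparallel $\SL_2$ calculation is incorrect.
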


Modulo level calculations, and the relative standard
commutator formula \cite{you92,RNZ2} this theorem easily implies
the following result.

\begin{The}\label{The:3}
Let $\Phi$ be a reduced irreducible root system of rank $\ge 2$.
In the cases\/ $\Phi=\C_2,\G_2$ assume that $R$ does not have residue
fields\/ ${\Bbb F}_{\!2}$ of\/ $2$ elements and in the
case\/ $\Phi=\C_l$, $l\ge 2$, assume additionally that any\/
$\theta\in R$ is contained in the ideal\/ $\theta^2R+2\theta R$.
\par
Further, let $I$ and $J$ be two ideals of a commutative ring $R$.
Then the mixed commutator subgroup
$\big[E(\Phi,R,I),E(\Phi,R,J)\big]$ is generated as a group by
the elements of the form
\par\smallskip
$\bullet$ $\big[x_{\alpha}(\xi),z_{\alpha}(\zeta,\eta)\big]$,
\par\smallskip
$\bullet$ $\big[x_{\alpha}(\xi),x_{-\alpha}(\zeta)\big]$,
\par\smallskip
$\bullet$ $z_{\alpha}(\xi\zeta,\eta)$,
\par\smallskip\noindent
where in all cases $\alpha\in\Phi$, $\xi\in I$, $\zeta\in J$,
$\eta\in R$.
\end{The}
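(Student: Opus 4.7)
The plan is to deduce Theorem~\ref{The:3} from Theorem~\ref{The:2}, using Theorem~4 together with the relative standard commutator formula of \cite{you92,RNZ2}. Let $L$ denote the subgroup of $E(\Phi,R)$ generated, as an abstract group, by the three types of elements listed in Theorem~\ref{The:3}. The inclusion $L\subseteq[E(\Phi,R,I),E(\Phi,R,J)]$ is immediate: types (i) and (ii) are already commutators of the required form, while $z_\alpha(\xi\zeta,\eta)\in E(\Phi,R,IJ)\subseteq[E(\Phi,R,I),E(\Phi,R,J)]$ by Theorem~4.

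The first substantive step is to show that $E(\Phi,R,IJ)\subseteq L$. Applying Theorem~\ref{The:1} to the ideal $IJ$, the group $E(\Phi,R,IJ)$ is generated as an abstract group by the elements $z_\gamma(s,\mu)$ with $\gamma\in\Phi$, $s\in IJ$, $\mu\in R$. Writing $s=\sum_i\xi_i\zeta_i$ with $\xi_i\in I$ and $\zeta_i\in J$, the additivity relation $z_\gamma(a+b,\mu)=z_\gamma(a,\mu)\,z_\gamma(b,\mu)$ expresses each such generator as a product of type (iii) generators of $L$. Hence $E(\Phi,R,IJ)\subseteq L$.

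Next, I would establish normality of $L$ in $E(\Phi,R)$. By Theorem~4 we have $L\subseteq[E(\Phi,R,I),E(\Phi,R,J)]\subseteq G(\Phi,R,IJ)$, so the relative standard commutator formula yields
$$[c,g]\in\big[E(\Phi,R),G(\Phi,R,IJ)\big]\subseteq E(\Phi,R,IJ)\subseteq L$$
for every $c\in E(\Phi,R)$ and every $g\in L$. Since $cgc^{-1}=[c,g]\cdot g\in L$, this gives $L\unlhd E(\Phi,R)$.

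Finally, $L$ contains all three types of generators appearing in Theorem~\ref{The:2}: types (i) and (ii) are identical to those of Theorem~\ref{The:3}, and $x_\alpha(\xi\zeta)=z_\alpha(\xi\zeta,0)$ is a type (iii) generator of $L$. Being normal in $E(\Phi,R)$, the group $L$ therefore contains the full $E(\Phi,R)$-normal closure of these elements, which by Theorem~\ref{The:2} is exactly $[E(\Phi,R,I),E(\Phi,R,J)]$. The hard work is concentrated in Theorem~\ref{The:2}, where the delicate case-by-case Chevalley commutator manipulations are carried out and where the exceptional hypotheses on $R$ for $\Phi=\C_2,\G_2,\C_l$ enter, together with the relative standard commutator formula; once these are granted, the present reduction is, as the authors themselves remark, essentially formal.
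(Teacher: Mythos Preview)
Your argument is correct and follows essentially the same route as the paper's own proof: you show that the subgroup $L$ generated by the listed elements contains $E(\Phi,R,IJ)$ via Theorem~\ref{The:1}, then use the level inclusion $L\subseteq G(\Phi,R,IJ)$ from Theorem~4 together with the standard commutator formula to conclude that $L$ is normal in $E(\Phi,R)$, whence Theorem~\ref{The:2} gives the reverse inclusion. One minor terminological point: the inclusion $[E(\Phi,R),G(\Phi,R,IJ)]\subseteq E(\Phi,R,IJ)$ you invoke is the \emph{absolute} standard commutator formula (Lemma~\ref{Lem:1} or~\ref{Lem:2} in the paper), not the relative one of Theorem~5; the paper's introduction makes the same loose reference, but its proof explicitly cites Lemma~\ref{Lem:2}.
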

In the proofs, the generating sets, described in these theorems,
will be denoted by $X$ and $Y$, respectively. Then the theorems
assert that
$$ \big[E(\Phi,R,I),E(\Phi,R,J)\big]={\langle X\rangle}^{E(\Phi,R)}=
{\langle Y\rangle}. $$
\noindent
Clearly, $X\subseteq Y$.
\par
For the case of the general linear group $\GL(n,A)$ over a
[not necessarily commutative] associative ring $A$, a similar set
of generators of mixed commutator subgroups as normal subgroups of
the absolute elementary group was constructed by the first and the
third author in \cite{RZ12}. Of course, in the non-commutative case
$[E(n,A,I),E(n,A,J)]\ge E(n,A,IJ+JI)$, so that one has to list
both $t_{ij}(\xi\zeta)$ and $t_{ij}(\zeta\xi)$, where
$1\le i\neq j\le n$, $\xi\in I$, $\zeta\in J$, as generators.
Later, this result was generalised to unitary groups in our
paper \cite{RNZ3}, see also \cite{RNZ4} for background, detailed
overview, and somewhat finer proofs.
\par
A slightly more delicate construction of generators of the mixed
commutator subgroups as subgroups was first carried through in
our joint paper with Alexei Stepanov \cite{HSVZmult}, again in
the simplest case of $\GL(n,R)$. As before in the non-commutative case
one has to engage both $z_{ij}(\xi\zeta,\eta)$ and
$z_{ij}(\zeta\xi,\eta)$ as generators.
\par
In the context of Chevalley groups, these results were announced
in our joint papers with Alexei Stepanov \cite{yoga2} and
\cite{portocesareo}. They are instrumental in Stepanov's proof
of the bounded width of relative commutators in terms of elementary
generators \cite{stepanov10}.
\par
The paper is organised as follows. In \S~\ref{sec:1} we recall
background facts that will be used in our proofs. In \S~2 we
implement a slightly stronger version of level calculations,
Theorem 4, further elaborating \cite{RNZ2}, Lemma 17. After
that, in \S~\ref{sec:3} we
are in a position to prove Theorem~\ref{The:2}, while in
\S~\ref{sec:4} we prove Theorem~\ref{The:3} and derive some of
its corollaries. Finally, in \S~\ref{sec:5} we discuss some further
aspects of relative commutator subgroups
$\big[E(\Phi,R,I),E(\Phi,R,J)\big]$, describe several cognate
results and applications, and formulate some unsolved problems.


\section{Some preliminary facts}\label{sec:1}

Here we collect some obvious or well known classical facts, which
will be used in our proofs. For background information on Chevalley
groups over rings, see \cite{NV91,vavplot}, where one can find many
further references.
\subsection{Commutator identities}
Let $G$ be a group. For any $x,y\in G$,  ${}^xy=xyx^{-1}$  denotes
the left $x$-conjugate of $y$. As usual, $[x,y]=xyx^{-1}y^{-1}$
denotes the [left normed] commutator of $x$ and $y$. We will make
constant use of the following obvious commutator identities, usually,
without any specific reference
\par\smallskip
(C1) $[x,yz]=[x,y]\cdot{}^y[x,z]$,
\par\smallskip
(C2) $[xy,z]={}^x[y,z]\cdot[x,z]$,
\par\smallskip
(C3) $[x,{}^yz]={}^y[{}^{y^{-1}}x,z]$,
\par\smallskip
(C4) $[{}^yx,z]={}^{y}[x,{}^{y^{-1}}z]$,
\par\smallskip
(C5) $[y,x]={[x,y]}^{-1}$.
\par\smallskip

Let $F,H\le G$ be two subgroups of $G$. By definition, the
mixed commutator subgroup $[F,H]$ is the subgroup generated by
all commutators $[x,y]$, where $x\in F$, $y\in H$. Clearly,
$[F,H]=[H,F]$. A subgroup $H\le G$ is normal in $G$ if $[G,H]\le H$.
A group $G$ is called perfect if $[G,G]=G$ and a subgroup $H\le G$
is called $G$-perfect if $[G,H]=H$.

\subsection{Steinberg relations} As in the introduction,
we denote by $x_{\a}(\xi)$, $\a\in\Phi$, $\xi\in R$ the
elementary generators of the [absolute] elementary subgroup
$E(\Phi,R)$. All results of the present paper are [directly or
indirectly] based on the Steinberg relations among the
elementary generators, which will be used without any
specific reference.
\par\smallskip
(R1) Additivity of $x_{\a}$,
$$ x_{\a}(\xi+\eta)=x_{\a}(\xi)x_{\a}(\eta). $$
\par
(R2) Chevalley commutator formula
$$ [x_{\a}(\xi),x_{\beta}(\eta)]=\prod_{i\a+j\beta\in \Phi}
x_{i\a+j\beta}(N_{\a\beta ij}\xi^i \eta^j), $$
\noindent
where $\a\not=-\beta$ and $N_{\a\beta ij}$ are the structure constants
which do not depend on $a$ and $b$. Notice, though, that for $\Phi=\G_2$
they may depend on the order of the roots in the product on the right
hand side.
\par
In particular, relation (R1) implies that $X_{\a}=\{x_{\a}(\xi),\xi\in R\}$
is a subgroup of $E(\Phi,R)$, isomorphic to $R^+$, called an
[elementary] root subgroup.

\subsection{Congruence subgroups}
Let $\rho_I:R\map R/I$ be the reduction modulo $I$. By functoriality,
it defines the group homomorphism $\rho_I:G(\Phi,R)\map G(\Phi,R/I)$.
The kernel of $\rho_I$ is denoted by $G(\Phi,R,I)$ and is called
the principal congruence subgroup of $G(\Phi,R)$ of level $I$.
In turn, the full pre-image of the centre of $G(\Phi,R/I)$ with
respect to the reduction homomorphism $\rho_{I}$ is called the
full congruence subgroup of level $I$, and
is denoted by $C(\Phi,R,I)$.
\par
A classical theorem by Andrei Suslin, Vyacheslav Kopeiko and
Giovanni Taddei, stated in the next subsection, asserts that $E(\Phi,R,I)$
is a normal subgroup of $G(\Phi,R)$. In particular, the quotient
$$ \K_1(\Phi,R,I)=G(\Phi,R,I)/E(\Phi,R,I), $$
\noindent
is a group and not just a pointed set.
\par
Theorem~\ref{The:2} is purely elementary, in the technical sense that all
calculations may take place inside $E(\Phi,R)$ and be performed
in terms of the Steinberg relations among elementary generators.
However, our proof of Theorem~\ref{The:3} is not elementary, in this sense,
and depends on the fact that, modulo the relative elementary
subgroup $E(\Phi,R,IJ)$, a mixed commutator subgroup
$\big[E(\Phi,R,I),E(\Phi,R,J)\big]$ lives inside $\K_1(\Phi,R,IJ)$.

\subsection{Standard commutator formula} In the present paper,
we use the following fundamental facts, known as the [absolute]
standard commutator formulas. The first proof of this result
by Giovanni Taddei \cite{taddei} and Leonid Vaserstein
\cite{vaser86} was based on localisation. We refer the reader
to \cite{RN1} for a slightly easier proof, based on similar
ideas, and to \cite{RN} for many further references, pertaining
to the classical cases, and other proofs.

\begin{Lem}\label{Lem:1} Let\/ $\rk(\Phi)\ge 2$. Then for any
ideal\/ $I$ of the ring\/ $R$ one has the following inclusions
$$ \big[E(\Phi,R,I),G(\Phi,R)\big],
\big[G(\Phi,R,I),E(\Phi,R)\big]\le E(\Phi,R,I). $$
\end{Lem}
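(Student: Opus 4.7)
The plan is to follow the classical Quillen--Suslin localization--patching technique, adapted to Chevalley groups by Abe, Taddei, and Vaserstein. I focus on the first inclusion $\big[E(\Phi,R,I),G(\Phi,R)\big]\le E(\Phi,R,I)$; the second is handled in parallel. By the Steinberg relations and the commutator identities (C1)--(C5), it suffices to show that ${}^{g}x_{\a}(\xi)\in E(\Phi,R,I)$ for every $\a\in\Phi$, $\xi\in I$, and $g\in G(\Phi,R)$, since the elements $x_{\a}(\xi)$ with $\xi\in I$ generate $E(\Phi,R,I)$ as a normal subgroup of $E(\Phi,R)$ and every element of $G(\Phi,R)$ can be transported through these generators via a conjugation identity.

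The core step is localization. For each maximal ideal $\gm$ of $R$, I would pass to the local ring $R_{\gm}$; there, the image of $g$ admits a Gauss-type decomposition as a product of a torus element and a bounded number of root unipotents. Using the Chevalley commutator formula (R2) to push $x_{\a}(\xi)$ through this decomposition writes ${}^{g}x_{\a}(\xi)$ as an explicit product of elementary generators of level $I_{\gm}$ inside $E(\Phi,R_{\gm},I_{\gm})$. To promote this local conclusion to a global one, one introduces an auxiliary parameter $t$, views ${}^{g}x_{\a}(t\xi)$ as taking values in $G(\Phi,R[t])$, and exploits the fact that its evaluations at $t=0$ and $t=1$ differ by an element that is locally elementary of level $I$; the Quillen--Suslin patching argument then forces the global conclusion.

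The main technical obstacle is controlling the denominators introduced by the local Gauss decomposition, so that the local identities can be glued along a principal cover of the prime spectrum of $R$. The auxiliary polynomial parameter $t$ is crucial here: it allows one to absorb each denominator into a sufficiently high power of a non-nilpotent element and then exploit the rigidity of elementary root unipotents under the specializations $t\mapsto 0,1$. Detailed realisations of this scheme are given in \cite{taddei} and \cite{vaser86}, with a streamlined version in \cite{RN1}, to which we refer for the explicit calculations and for the verification that no restriction beyond $\rk(\Phi)\ge 2$ is needed.
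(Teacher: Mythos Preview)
The paper does not supply its own proof of this lemma: it is stated as a classical fact (the absolute standard commutator formula) attributed to Taddei and Vaserstein, with pointers to \cite{taddei}, \cite{vaser86} and the streamlined account in \cite{RN1}. Your proposal sketches precisely the localisation--patching argument of those references and cites the same sources, so you are in agreement with the paper.

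One small remark: your reduction of the first inclusion to ${}^{g}x_{\a}(\xi)\in E(\Phi,R,I)$ is fine, since that inclusion is exactly normality of $E(\Phi,R,I)$ in $G(\Phi,R)$. The second inclusion $\big[G(\Phi,R,I),E(\Phi,R)\big]\le E(\Phi,R,I)$ is not literally ``parallel'': it does not reduce to a normality statement but requires showing that $[g,x_{\a}(\eta)]\in E(\Phi,R,I)$ for $g\in G(\Phi,R,I)$ and $\eta\in R$, which in the cited proofs is again obtained by localising $g$ and controlling the resulting unipotent pieces. This is a minor gap in your outline, not in substance, since the references you invoke cover it.
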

It is classically known, that the only cases, where $E(\Phi,R)$
is not perfect --- or, for that matter, where $E(\Phi,R,I)$ is
not $E(\Phi,R)$-perfect --- are groups of types $\C_2$ and $\G_2$
over a ring having residue fields ${\Bbb F}_{\!2}$ of two
elements. Thus,
\begin{Lem}\label{Lem:2} Let\/ $\rk(\Phi)\ge 2$. In the case
$\Phi=\C_2,\G_2$ assume additionally that $R$ does not have
residue fields ${\Bbb F}_{\!2}$ of two elements. Then for any
ideal\/ $I$ of the ring\/ $R$ one has the following inclusions
$$ \big[E(\Phi,R,I),G(\Phi,R)\big]=
\big[G(\Phi,R,I),E(\Phi,R)\big]=E(\Phi,R,I). $$
\end{Lem}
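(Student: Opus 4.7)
The first inclusions $[E(\Phi,R,I),G(\Phi,R)],\,[G(\Phi,R,I),E(\Phi,R)]\le E(\Phi,R,I)$ are already provided by Lemma~\ref{Lem:1}. What remains is the reverse inclusion, and since $E(\Phi,R)\le G(\Phi,R)$, it suffices to prove that
$$ E(\Phi,R,I)\le\big[E(\Phi,R,I),E(\Phi,R)\big]. $$
By definition, $E(\Phi,R,I)$ is the normal closure in $E(\Phi,R)$ of the set of elementary generators $x_{\alpha}(\xi)$, $\alpha\in\Phi$, $\xi\in I$. The right-hand side is a commutator of two $E(\Phi,R)$-normal subgroups, hence is itself normal in $E(\Phi,R)$. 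Consequently, the proof reduces to showing that each such $x_{\alpha}(\xi)$ is expressible as a product of commutators $[e,g]$ with $e\in E(\Phi,R,I)$ and $g\in E(\Phi,R)$.

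The plan is to extract such an expression from the Chevalley commutator formula (R2). For $\rk(\Phi)\ge 2$ and $\alpha\in\Phi$ outside the exceptional configurations of $\C_2$ and $\G_2$, one can pick $\beta\in\Phi$ with $\alpha-\beta\in\Phi$ such that no further positive integer combinations of $\alpha-\beta$ and $\beta$ lie in $\Phi$. Then (R2) collapses to a single factor,
$$ \big[x_{\alpha-\beta}(\xi),x_{\beta}(1)\big]=x_{\alpha}(\pm\xi), $$
giving the desired presentation. For doubly-laced $\Phi$ of rank at least $3$, short roots require a little more care: one chooses the pair $(\alpha-\beta,\beta)$ so that the extra summands which do appear are elementary generators of strictly greater height, and then proceeds by descending induction on the partial order of $\Phi^+$, appealing to the Weyl-group action to cover negative roots.

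The real obstacle is the exceptional case $\Phi=\C_2,\G_2$ where every attempt to write a short-root generator $x_{\alpha}(\xi)$ as such a commutator produces a Chevalley structure constant divisible by $2$ (or by $3$, for short roots of $\G_2$). Over a ring with a residue field $\mathbb{F}_2$ these factors are genuine obstructions, and $E(\Phi,R,I)$ indeed fails to be $E(\Phi,R)$-perfect; this is why the hypothesis is imposed. Under the assumption that no such residue field occurs, one can pick $\theta\in R$ whose image in every residue field satisfies $\theta^{2}-\theta\in R^{*}$, and use a toral conjugation $h_{\beta}(\theta)\,x_{\alpha}(\xi)\,h_{\beta}(\theta)^{-1}=x_{\alpha}(\theta^{c}\xi)$ with a suitable weight $c$ to twist the problematic commutator identity into one that recovers $x_{\alpha}(\xi)$. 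This is the one step where the residue-field hypothesis is essential; the rest is bookkeeping with (R1), (R2) and the identities (C1)--(C5).
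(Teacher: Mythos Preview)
The paper does not give a proof of Lemma~\ref{Lem:2}; it simply records it as ``classically known''. Your overall strategy --- reduce to showing each $x_{\alpha}(\xi)$, $\xi\in I$, lies in $[E(\Phi,R,I),E(\Phi,R)]$, then extract it from a Chevalley commutator --- is the standard one, and your treatment of the non-exceptional cases is fine.

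The exceptional case, however, contains a genuine gap. You claim that when $R$ has no residue field $\mathbb{F}_2$ one can ``pick $\theta\in R$ with $\theta^2-\theta\in R^*$'' and then use the semisimple element $h_{\beta}(\theta)$. But $\theta^2-\theta\in R^*$ forces both $\theta$ and $\theta-1$ to be units, and such a $\theta$ need not exist. Take $R=\mathcal{O}_{\mathbb{Q}(\sqrt{-11})}=\mathbb{Z}[\alpha]$ with $\alpha^2-\alpha+3=0$: here $2$ is inert (the minimal polynomial reduces to $\alpha^2+\alpha+1$ over $\mathbb{F}_2$), so every residue field has at least $3$ elements, yet $R^*=\{\pm 1\}$ and neither $1-1=0$ nor $-1-1=-2$ is a unit. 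Thus no such $\theta$ exists and the toral-conjugation argument cannot even get started.

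What the hypothesis \emph{does} give you is that the ideal generated by all $\theta^2-\theta$, $\theta\in R$, is the whole of $R$; equivalently $1=\sum_i c_i(\theta_i^2-\theta_i)$ for finitely many $\theta_i\in R$, $c_i\in R$. This is exactly the device used in the paper's own proof of Theorem~\ref{The:4}: by comparing $[x_{\beta}(\theta\xi),x_{\gamma}(\eta)]$ with $[x_{\beta}(\xi),x_{\gamma}(\theta\eta)]$ one isolates a factor $x_{\alpha}(\pm(\theta^2-\theta)\xi\eta^k)$, and then additivity of $x_{\alpha}$ together with the above decomposition of $1$ yields $x_{\alpha}(\xi)$ for arbitrary $\xi\in I$. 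Replace your single-$\theta$ toral argument by this finite-sum argument and the proof goes through. (Incidentally, in $\C_2$ it is the \emph{long} roots that carry the obstruction, since the only way to write a long root as a sum of two roots is as a sum of two short roots, with structure constant $\pm 2$; your remark about ``$3$ for short roots of $\G_2$'' is also off --- the relevant obstructions there are again governed by $\theta^2-\theta$, not cubes.)
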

Actually, in the final section we recall a {\it birelative\/}
standard commutator formula, which is a simultaneous generalisation of
both standard formulas above. This formula, and attempts to generalise
it, served as the main motivation behind the results of the present
paper. However, it is not directly used in their proofs.

\subsection{Elementary subgroups of level $I$} Let $I$ be an ideal
of $R$. In the sequel we use also the following non relativised
version of elementary subgroups of level $I$:
$$ E(\Phi,I)={\big\langle x_{\alpha}(\xi)\mid \alpha\in\Phi,\
\xi\in R\big\rangle} $$
\noindent
By definition, $E(\Phi,R,I)={E(\Phi,I)}^{E(\Phi,R)}$ is the normal
closure of $E(\Phi,I)$ in the absolute elementary group $E(\Phi,R)$,
and thus, normal in $G(\Phi,R)$.
\par

\begin{Lem}\label{Lem:3}
Let\/ $\rk(\Phi)\ge 2$ and further let\/ $I$ and\/ $J$ be two
ideals of\/ $R$. Assume that either $\Phi\neq\C_l$, or $2\in R^*$.
Then one has $E(\Phi,R,IJ)\le E(\Phi,I+J)$.
\end{Lem}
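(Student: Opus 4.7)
The plan is to invoke Theorem~\ref{The:1} (Stein--Tits--Vaserstein) applied to the ideal $IJ$. This reduces the inclusion to showing that every generator
\[ z_\alpha(\xi,\eta) = x_{-\alpha}(\eta)\,x_\alpha(\xi)\,x_{-\alpha}(-\eta), \qquad \alpha\in\Phi,\ \xi\in IJ,\ \eta\in R, \]
of $E(\Phi,R,IJ)$ lies in $E(\Phi,I+J)$. Since $z_\alpha(\cdot,\eta)$ is additive in its first argument, it further suffices to treat $\xi=\xi_0\zeta_0$ with $\xi_0\in I$ and $\zeta_0\in J$.

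The key step is to rewrite $x_\alpha(\xi_0\zeta_0)$ using the Chevalley commutator formula. As $\rk(\Phi)\ge 2$, for any $\alpha\in\Phi$ we may choose linearly independent roots $\beta,\gamma\in\Phi\setminus\{\pm\alpha\}$ with $\alpha=\beta+\gamma$ and leading structure constant $N:=N_{\beta\gamma 11}$ invertible in $R$. For $\Phi\ne C_l$ this holds with $N=\pm1$; the sole obstruction arises for a long root $\alpha=2e_i$ in type $C_l$, where every decomposition has the form $(e_i+e_j)+(e_i-e_j)$ and gives $N=\pm2$---handled precisely by the assumption $2\in R^*$. After absorbing $N$ into $\xi_0$, the Chevalley formula yields
\[ x_\alpha(\xi_0\zeta_0) = [x_\beta(\xi_0),x_\gamma(\zeta_0)]\cdot H, \]
where $H$ is a product of root unipotents $x_\delta(c)$ with $\delta=i\beta+j\gamma$, $(i,j)\ne(1,1)$, $i,j\ge 1$, and $c\in IJ$.

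It remains to conjugate both factors by $x_{-\alpha}(\eta)$. For the commutator,
\[ {}^{x_{-\alpha}(\eta)}[x_\beta(\xi_0),x_\gamma(\zeta_0)] = [U,V],\qquad U={}^{x_{-\alpha}(\eta)}x_\beta(\xi_0),\ V={}^{x_{-\alpha}(\eta)}x_\gamma(\zeta_0). \]
Since $-\alpha\ne\pm\beta$, the Chevalley formula gives $U=[x_{-\alpha}(\eta),x_\beta(\xi_0)]\cdot x_\beta(\xi_0)$, and every factor on the right carries a positive power of $\xi_0\in I$; hence $U\in E(\Phi,I)\subseteq E(\Phi,I+J)$. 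Symmetrically $V\in E(\Phi,J)\subseteq E(\Phi,I+J)$, so $[U,V]\in E(\Phi,I+J)$. For a factor $x_\delta(c)$ of $H$, the root $\delta=i\beta+j\gamma$ with $i,j\ge 1$, $(i,j)\ne(1,1)$ lies in the open positive cone of $\beta,\gamma$ and is therefore distinct from $\pm\alpha=\pm(\beta+\gamma)$; a second application of the Chevalley formula shows that ${}^{x_{-\alpha}(\eta)}x_\delta(c)\in E(\Phi,IJ)\subseteq E(\Phi,I+J)$, because $c\in IJ$ forces every newly produced parameter into $IJ$. Combining, $z_\alpha(\xi_0\zeta_0,\eta)\in E(\Phi,I+J)$, which is what was to be proved.

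The main obstacle is securing the decomposition $\alpha=\beta+\gamma$ with a unit leading constant. This is exactly where the hypothesis \textit{$\Phi\ne C_l$ or $2\in R^*$} is indispensable: it eliminates the factor of $2$ that appears in the commutator of the two short roots summing to a long root in type $C_l$.
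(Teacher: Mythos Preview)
Your argument is correct. The paper does not supply a self-contained proof of Lemma~\ref{Lem:3}, deferring instead to \cite{RNZ2}, Lemma~16; but your proof follows exactly the template the paper deploys for the stronger Theorem~\ref{The:4}: reduce via Theorem~\ref{The:1} to the generators $z_\alpha(\xi_0\zeta_0,\eta)$, decompose $\alpha=\beta+\gamma$ with invertible leading structure constant, express $x_\alpha(\xi_0\zeta_0)$ through the Chevalley commutator formula, and push the conjugation by $x_{-\alpha}(\eta)$ inside. The difference is only in the target. Theorem~\ref{The:4} aims at the smaller group $\big[E(\Phi,I),E(\Phi,J)\big]$ and must therefore labour over the higher-order factor $H$ case by case (this is precisely where the residue-field and $\theta\in\theta^2R+2\theta R$ hypotheses enter), whereas for the larger target $E(\Phi,I+J)$ your single observation that every parameter in $H$ already lies in $IJ\subseteq I+J$, and remains there after conjugation by $x_{-\alpha}(\eta)$, disposes of $H$ immediately. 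That is why your argument needs only the hypothesis ``$\Phi\neq\C_l$ or $2\in R^*$'', exactly what is required to secure $N_{\beta\gamma11}\in R^*$ for a long symplectic root.
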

To state an analogue of this inclusion in the exceptional case,
we should distinguish the ideal $I^2$, generated by the products
$ab$, where $a,b\in I$, from the ideal $I\supb{2}$, generated
by $a^2$, where $a\in I$. Clearly, when $2\in R^*$ these ideals
coincide, but this case is trivial anyway. In general one can only
guarantee a weaker inclusion $E(\Phi,R,I\supb{2}J+2IJ+IJ\supb{2})\le
E(\Phi,I+J)$.
In fact, both the generic and the exceptional cases are special
instances of the following more precise statement,
$$ E(\Phi,R,IJ,I\supb{2}J+2IJ+IJ\supb{2})\le E(\Phi,I+J), $$
\noindent
see \cite{RNZ2}, Lemma 16. Here, the left hand side is the relative
elementary subgroups corresponding to the admissible pair
$(IJ,I\supb{2}J+2IJ+IJ\supb{2})$, see \cite{abe69,abe89}, or
\cite{HPV} for further references.

\subsection{Unitriangular subgroups} Further, we fix an order on
$\Phi$ and denote by $\Pi=\{\a_1,\ldots,\a_l\}$, $\Phi^+$ and
$\Phi^-$ the corresponding sets of fundamental, positive and
negative roots, respectively.
\par
The unipotent radicals of the opposite Borel subgroups, standard
with respect to this order, are defined as follows:
$$ \begin{aligned}
U&=U(\Phi,R)=
\big\langle x_\a(\xi),\ \a\in\Phi^+,\ \xi\in R\big\rangle, \\
\noalign{\vskip 4pt}
U^-&=U^-(\Phi,R)=
\big\langle x_\a(\xi),\ \a\in\Phi^-,\ \xi\in R\big\rangle. \\
\end{aligned} $$
\noindent
By definition, $E(\Phi,R)=\langle U(\Phi,R),U^-(\Phi,R)\rangle$.
\par
The Chevalley commutator formula immediately implies the following
result, see for instance, \cite{carter} or \cite{steinberg}.

\begin{Lem}\label{Lem:4}
The group $U(\Phi,R)$ admits factorisation $U(\Phi,R)=\prod X_{\a}$,
over all positive roots $\a\in\Phi^+$, taken in any prescribed
order. For a given order, an expression of an element $u\in U(\Phi,R)$
in the form $u=\prod x_{\a}(u_\a)$, where $u_{\a}\in R$, is
unique.
\end{Lem}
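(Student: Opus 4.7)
The lemma asserts both the existence of the ordered product representation and its uniqueness. These are essentially independent problems: existence is a rewriting argument from the Chevalley commutator formula, whereas uniqueness is the substantive geometric content and requires input beyond the Steinberg relations alone.

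For existence I would fix the prescribed ordering $\alpha_1,\ldots,\alpha_N$ of $\Phi^+$ and view (R2) as a rewrite rule. Since $U(\Phi,R)$ is by definition generated by $X_\alpha$ for $\alpha\in\Phi^+$, any $u\in U(\Phi,R)$ is a finite word in generators $x_\alpha(\xi)$. Whenever two adjacent factors $x_\beta(\eta)\,x_\alpha(\xi)$ appear out of prescribed order, (R2) yields
$$x_\beta(\eta)\,x_\alpha(\xi)=x_\alpha(\xi)\,x_\beta(\eta)\prod_{\substack{i,j\ge 1\\ i\alpha+j\beta\in\Phi}}x_{i\alpha+j\beta}\bigl(N_{\alpha\beta ij}\,\xi^i\eta^j\bigr),$$
where every root $i\alpha+j\beta$ with $i,j\ge 1$ is positive of height strictly greater than both $\alpha$ and $\beta$. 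A bubble-sort organised by an outer induction on height then places every factor into its correct slot; each swap introduces only factors of strictly larger height, which are themselves sorted at later stages. The procedure terminates because heights of roots are bounded above by the height of the highest root of $\Phi$. A final application of additivity (R1) collapses any repeated $x_{\alpha_i}(\cdot)$ factors into a single one, producing the required normal form.

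Uniqueness is the main obstacle, and is genuinely not a formal consequence of the Steinberg relations. Its content is that the set-theoretic product map $\prod_{i=1}^N X_{\alpha_i}\to U(\Phi,R)$ is injective, which reflects the affine-space structure of the unipotent radical as an $R$-scheme. My plan is to invoke the classical result, established in the Chevalley--Demazure construction and already recorded in \cite{steinberg,carter,NV91}, that this product morphism is in fact an isomorphism of $R$-schemes onto $\mathbb{A}^{|\Phi^+|}_R$. Concretely, uniqueness can be checked inside a faithful rational representation $\pi\colon G(\Phi,R)\to\GL(V)$ by picking a lowest weight vector $v$ of weight $\mu$ and recovering each $u_{\alpha_i}$ inductively from the weight-$(\mu+\alpha_i)$ component of $\pi(u)v$: the contribution of $x_{\alpha_i}(u_{\alpha_i})$ to that component is proportional to $u_{\alpha_i}$ by a nonzero integer, while contributions of the remaining factors land in weights of the form $\mu+\gamma$ with $\gamma$ a sum of at least two positive roots, which are peeled off by a height induction. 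In view of the permission to avoid routine calculation I would not reproduce this verification in detail.
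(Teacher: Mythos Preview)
Your proposal is correct and aligns with the paper's treatment: the paper does not prove Lemma~\ref{Lem:4} but merely asserts that it follows from the Chevalley commutator formula and cites \cite{carter,steinberg}; your existence argument via height-guided rewriting is precisely that derivation, and your uniqueness argument via a faithful rational representation is the standard one in those references (and is exactly the device the paper itself invokes in the paragraph following Lemma~\ref{Lem:5}). There is nothing to add.
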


In turn, uniqueness of factors in Lemma~\ref{Lem:4} immediately implies the
following result, which will be used in the proof of Theorem~\ref{The:2}.

\begin{Lem}\label{Lem:5}
For any ideal $I$ of $R$ one has
$$ G(\Phi,R,I)\cap U(\Phi,R)\le E(\Phi,I). $$
\end{Lem}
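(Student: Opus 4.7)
The idea is to combine Lemma~\ref{Lem:4} with the functoriality of the reduction homomorphism, and nothing more. Given $u\in G(\Phi,R,I)\cap U(\Phi,R)$, Lemma~\ref{Lem:4} (applied to $R$, for some fixed order on $\Phi^+$) provides a \emph{unique} expression
$$ u\ =\ \prod_{\a\in\Phi^+} x_{\a}(u_{\a}),\qquad u_{\a}\in R. $$

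Next, I would apply the reduction homomorphism $\rho_I\colon G(\Phi,R)\map G(\Phi,R/I)$. Since $x_\a$ is functorial in the ring, the image lies in $U(\Phi,R/I)$ and is given by
$$ \rho_I(u)\ =\ \prod_{\a\in\Phi^+} x_{\a}\bigl(\rho_I(u_{\a})\bigr). $$
By assumption $u\in G(\Phi,R,I)=\ker\rho_I$, so the left-hand side equals the identity. Applying Lemma~\ref{Lem:4} now to the ring $R/I$, the identity of $U(\Phi,R/I)$ admits only the trivial decomposition, hence $\rho_I(u_\a)=0$ for every $\a\in\Phi^+$. Equivalently, $u_\a\in I$ for every positive root $\a$.

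Once each coefficient $u_\a$ lies in $I$, the factorisation $u=\prod x_{\a}(u_{\a})$ exhibits $u$ as a product of elementary root unipotents of level $I$, so $u\in E(\Phi,I)$ directly from the definition of the (non-relativised) elementary subgroup of level $I$ recalled just before the lemma. There is no real obstacle here: the entire argument is a one-step appeal to uniqueness of the unitriangular decomposition in Lemma~\ref{Lem:4}, together with the tautological fact that $\rho_I$ sends $x_\a(\xi)$ to $x_\a(\xi+I)$. The only point that deserves a sentence of care is invoking uniqueness over the quotient ring $R/I$, which is legitimate because Lemma~\ref{Lem:4} holds over an arbitrary commutative ring.
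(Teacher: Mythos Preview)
Your argument is correct and is essentially the approach the paper has in mind: it states that Lemma~\ref{Lem:5} follows immediately from the uniqueness of factors in Lemma~\ref{Lem:4}, which is exactly what you spell out by pushing the unique factorisation through the reduction map $\rho_I$ and invoking uniqueness over $R/I$. The only additional remark in the paper is an alternative viewpoint via a faithful rational representation $\pi\colon G(\Phi,R)\to\GL(n,R)$ sending $U(\Phi,R)$ into upper unitriangular matrices, so that membership in $G(\Phi,R,I)$ can be read off matrix entries; but this is supplementary rather than a different proof.
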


It is easy to see this also uses representations of $G(\Phi,R)$.
Namely, one considers a non-trivial rational representation
$\pi:G(\Phi,R)\map\GL(n,R)$ such that $T(\Phi,R)$ is represented
by diagonal matrices, $U(\Phi,R)$ is represented by upper
unitriangular matrices, and $U^-(\Phi,R)$ is represented by lower
unitriangular matrices. If $\pi$ is faithful, one has
$G(\Phi,R,I)=\pi^{-1}(\pi(G(\Phi,R))\cap\GL(n,R,I))$, see, for
instance, \cite{HPV}, Lemma~6. In particular, a product
$\prod x_{\a}(\xi_{\a})$, $\a\in\Phi^+$, belongs to $G(\Phi,R,I)$
if and only if $\xi_{\a}\in I$ for all $\a\in\Phi^+$.

\par
\subsection{Parabolic subgroups}
The main role in our simplified proof of Theorem~\ref{The:2} is played
by the Levi decomposition for [elementary] parabolic subgroups.
Classically, it asserts that any parabolic subgroup $P$ of
$G(\Phi,R)$ can be expressed as the semi-direct product
$P=L_P\rightthreetimes U_P$ of its unipotent radical
$U\unlhd P$ and a Levi subgroup $L_P\le P$. However, we do not
wish to recall the requisite background, to state the general
case. Luckily, this is not at all nesessary.
\par\smallskip
$\bullet$ Since we calculate inside $E(\phi,R)$, we can limit
ourselves to the {\it elementary\/} parabolic subgroups,
spanned by some root subgroups $X_{\a}$.
\par\smallskip
$\bullet$ Since we can choose the order on $\Phi$ arbitrarily,
we can further limit ourselves to {\it standard\/} parabolic
subgroups.
\par\smallskip
$\bullet$ Since the proof of Theorem~\ref{The:2} easily reduces to groups
of rank 2, we could only consider rank 1 parabolic subgroups,
which {\it in this case\/} coincide with maximal parabolic
subgroups.
\par\smallskip
It is notationally easier to describe rank 1 parabolics, which
in discrepancy with the usage of the theory of algebraic groups,
will be called {\it minimal\/} parabolics. Namely, we fix an
$r$, $1\le r\le l$, and consider the subgroup
$$ P_r=\langle U,X_{\a_r}\rangle\le E(\Phi,R), $$
\noindent
which we call the $r$-th {\it elementary\/} minimal standard
parabolic subgroup of $E(\Phi,R)$. Further, set
$$ U_r=\prod X_{\a},\qquad \a\in\Phi^+,\quad \a\neq\a_r; $$
\noindent
this is the unipotent radical of $P_r$. Finally, let
$L_r=\langle X_{\a_r},X_{-\a_r}\rangle$ be the [standard] Levi subgroup
of $P_r$.
\par
The following well result is an immediate corollary of the general
Levi decomposition. However, this special instance of Levi
decomposition can be easily derived from the Chevalley commutator
formula, without any reference to the theory of algebraic groups.

\begin{Lem}\label{Lem:7}
For any $1\le r\le\rk(\Phi)$ the group $P_r$ is the semi-direct
product $P_r=L_r\rightthreetimes U_r$ of its unipotent radical
$U_r\unlhd P_r$ and its Levi subgroup $L_r\le P_r$.
\end{Lem}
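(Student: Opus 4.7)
The plan is to verify the three defining properties of a semi-direct product: (i) $U_r$ is a normal subgroup of $P_r$; (ii) $L_r$ is a subgroup of $P_r$; and (iii) $P_r = L_r \cdot U_r$ with $L_r \cap U_r = \{1\}$. The main computational tool will be the Chevalley commutator formula (R2), combined with the observation that the simple root $\alpha_r$ has support $\{\alpha_r\}$, whereas every $\alpha \in \Phi^+ \setminus \{\alpha_r\}$ has a strictly positive coefficient at some simple root $\alpha_s$ with $s \neq r$.

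First I would show that $U_r$ is a subgroup. For $\alpha, \beta \in \Phi^+ \setminus \{\alpha_r\}$ and $i, j \geq 1$ with $i\alpha + j\beta \in \Phi$, the root $i\alpha + j\beta$ inherits a strictly positive $\alpha_s$-coefficient from $\alpha$, so it lies in $\Phi^+ \setminus \{\alpha_r\}$. By (R2) this forces $[X_\alpha, X_\beta] \subseteq U_r$, and combined with the uniqueness statement in Lemma~\ref{Lem:4} it follows that $U_r$ is a subgroup. For normality, the same support argument shows that for any $\alpha \in \Phi^+ \setminus \{\alpha_r\}$ and any sign $\varepsilon$, every root $\varepsilon i\alpha_r + j\alpha \in \Phi$ with $i, j \geq 1$ still lies in $\Phi^+ \setminus \{\alpha_r\}$; hence $[X_{\varepsilon \alpha_r}, X_\alpha] \subseteq U_r$. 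Since $U = X_{\alpha_r} \cdot U_r$ and $U_r$ normalises itself, both $U$ and $X_{-\alpha_r}$ normalise $U_r$; as $P_r$ is generated by $U$ and $X_{-\alpha_r}$, this yields $U_r \lhd P_r$.

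Next, the decomposition $P_r = L_r U_r$: by Lemma~\ref{Lem:4} one has $U = X_{\alpha_r} U_r \subseteq L_r U_r$, while $X_{-\alpha_r} \subseteq L_r$. Thus $L_r U_r$ contains a generating set for $P_r$; since $L_r$ normalises the normal subgroup $U_r$, the product $L_r U_r$ is itself a subgroup of $P_r$, and therefore equals $P_r$.

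The main obstacle, which cannot be extracted from the Chevalley commutator formula alone, is the trivial intersection $L_r \cap U_r = \{1\}$. I would argue via a faithful rational representation $\pi : G(\Phi,R) \to \GL(n,R)$ of the kind recalled in the remarks following Lemma~\ref{Lem:5}, under which $U(\Phi,R)$ lands in the upper unitriangular matrices. In this representation $L_r$ is a rank-one Chevalley group, and its elements become upper unitriangular exactly when they lie in $X_{\alpha_r}$ (by a direct $\SL_2$ calculation, or equivalently by the Bruhat decomposition for rank one). This gives $L_r \cap U \subseteq X_{\alpha_r}$, and then $X_{\alpha_r} \cap U_r = \{1\}$ follows from the uniqueness of factorisation in Lemma~\ref{Lem:4}, completing the verification that $P_r = L_r \rightthreetimes U_r$.
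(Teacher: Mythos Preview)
Your argument is correct and matches what the paper has in mind. The paper does not actually prove Lemma~\ref{Lem:7}: it states that the result is an immediate corollary of the general Levi decomposition for parabolic subgroups of algebraic groups, adds that this rank-one instance ``can be easily derived from the Chevalley commutator formula'', and singles out the normality of $U_r$ as the ``most important part''. Your support-counting verification of the normality of $U_r$ and of the product decomposition $P_r=L_rU_r$ is exactly that derivation. For the trivial intersection $L_r\cap U_r=\{1\}$ you correctly observe that the commutator formula alone does not suffice and pass to a faithful matrix representation; this is in the same spirit as the paper's own use of such representations after Lemma~\ref{Lem:5}, and is precisely the step that the paper's alternative justification --- citing the Levi decomposition from the theory of algebraic groups --- is meant to cover.
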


The most important part is the [obvious] claim that $U_r$
is normal in $P_r$.

\par


\section{Levels of mixed commutators}\label{sec:2}

The following easy level calculation was performed on several
occasions in various contexts,
see for instance, Theorem 1 of Hong You \cite{you92} for a
slightly weaker statement\footnote{With an extra-assumption
in the case $\Phi=\G_2$, and a missing assumption in the case of
$\Phi=\C_l$, $l\ge 3$.}. In this form, it is essentially a
combination of [a slightly stronger version of] Lemma~17 and
of Lemma~19 in \cite{RNZ2}. The
left-most inclusion serves to motivate the statements of
Theorems~\ref{The:2} and \ref{The:3}, and to verify
that the $X$ and $Y$ are actually contained in
$\big[E(\Phi,R,I),E(\Phi,R,J)\big]$, whereas the right-most
inclusion will be used to derive Theorem~\ref{The:3} from
Theorem~\ref{The:2}.

\begin{The}\label{The:4}
Let\/ $\Phi$ be a reduced irreducible root system of rank\/ $\ge 2$.
In the cases\/ $\Phi=\C_2,\G_2$ assume that $R$ does not have residue
fields\/ ${\Bbb F}_{\!2}$ of\/ $2$ elements and in the
case\/ $\Phi=\C_l$, $l\ge 2$, assume additionally that any\/
$\theta\in R$ is contained in the ideal\/ $\theta^2R+2\theta R$.
\par
Then for any two ideals\/ $I$ and\/ $J$
of the ring\/ $R$ one has the following inclusion
\begin{align*}
E(\Phi,R,IJ)\le\big[E(\Phi,I),E(\Phi,J)\big]&\le
\big[E(\Phi,R,I),E(\Phi,R,J)\big]\\
&\le\big[G(\Phi,R,I),G(\Phi,R,J)\big]\le G(\Phi,R,IJ).
\end{align*}
\end{The}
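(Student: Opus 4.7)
The four-term chain breaks into four inclusions of very unequal difficulty, which I would treat separately. The two middle ones,
\[
[E(\Phi,I),E(\Phi,J)] \le \bigl[E(\Phi,R,I),E(\Phi,R,J)\bigr] \le \bigl[G(\Phi,R,I),G(\Phi,R,J)\bigr],
\]
are immediate from monotonicity of the commutator bracket applied to $E(\Phi,I) \le E(\Phi,R,I) \le G(\Phi,R,I)$ (and the analogue for $J$); no hypotheses on $\Phi$ or $R$ are needed here.

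For the rightmost inclusion $[G(\Phi,R,I),G(\Phi,R,J)] \le G(\Phi,R,IJ)$, I would invoke a faithful rational representation $\pi:G(\Phi,R)\to\GL(n,R)$ of the kind used in the paragraph preceding Lemma~\ref{Lem:5}. For $g\in G(\Phi,R,I)$ and $h\in G(\Phi,R,J)$ one writes $\pi(g)=1+a$ and $\pi(h)=1+b$ with matrix entries $a_{ij}\in I$ and $b_{ij}\in J$. Commutativity of $R$ forces every entry of $ab-ba$ to lie in $IJ+JI=IJ$, whence $\pi(g)\pi(h)\equiv\pi(h)\pi(g)\pmod{IJ}$, so $\pi([g,h])\equiv 1\pmod{IJ}$, and therefore $[g,h]\in G(\Phi,R,IJ)$.

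The leftmost inclusion $E(\Phi,R,IJ)\le[E(\Phi,I),E(\Phi,J)]$ is the true content of the theorem, and the only step that calls on the hypotheses on $\Phi$ and $R$. The plan is first to capture each basic generator $x_\alpha(\xi\zeta)$ with $\alpha\in\Phi$, $\xi\in I$, $\zeta\in J$ via the Chevalley commutator formula: since $\rk(\Phi)\ge 2$, choose roots $\beta,\gamma\in\Phi$ with $\beta+\gamma=\alpha$ and expand
\[
[x_\beta(\xi),x_\gamma(\zeta)] = x_\alpha\bigl(N_{\beta\gamma 11}\,\xi\zeta\bigr)\cdot\!\!\prod_{i+j\ge 3}\! x_{i\beta+j\gamma}\bigl(N_{\beta\gamma ij}\,\xi^i\zeta^j\bigr).
\]
The left-hand side sits in $[E(\Phi,I),E(\Phi,J)]$, while the correction factors carry coefficients in strictly deeper ideals $I^iJ^j\subseteq IJ$ and are disposed of by a descending induction on the heights of the roots involved, tightened by Lemma~\ref{Lem:3}, which already places $E(\Phi,R,IJ)$ inside $E(\Phi,I+J)$.

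The two remaining obstacles are arithmetic and structural. The arithmetic one is the appearance of uninvertible structure constants $N_{\beta\gamma 11}=\pm 2$ (long roots of type $\C_l$ written as sums of short roots) or $\pm 3$ (type $\G_2$): the exceptional hypotheses neutralise these, since for $\C_l$ the assumption $\theta\in\theta^2R+2\theta R$ rewrites $\xi\zeta$ as $u(\xi\zeta)^2+2v\xi\zeta$, absorbing the $\pm 2$ into the second summand and pushing the first into a deeper inductive layer; and for $\C_2,\G_2$ the exclusion of residue fields ${\Bbb F}_{\!2}$ supplies exactly the $E(\Phi,R)$-perfectness of Lemma~\ref{Lem:2}. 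The structural obstacle, which I expect to be the main technical burden, is upgrading from elementary generators to the full $E(\Phi,R)$-normal closure: one must show that each conjugate ${}^yx_\alpha(\xi\zeta)$ lies in $[E(\Phi,I),E(\Phi,J)]$, whose $E(\Phi,R)$-normality is not a priori clear. I would re-run the Chevalley expansion on appropriately conjugated root unipotents and combine it with the $E(\Phi,R)$-perfectness of $E(\Phi,R,I)$ and $E(\Phi,R,J)$ to close the argument, following the blueprint of Lemma~17 of \cite{RNZ2} flagged by the authors as the starting point.
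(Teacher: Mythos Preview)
Your overall architecture is right and matches the paper: the two middle inclusions are monotonicity, the rightmost one is a matrix-entry computation in a faithful representation (this is Lemma~19 of \cite{RNZ2}), and the leftmost one is the substantive part, attacked via the Chevalley commutator formula. But two of your key reductions misfire in ways that leave real gaps.

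First, the appeal to Lemma~\ref{Lem:3} is idle: that lemma only places $E(\Phi,R,IJ)$ inside $E(\Phi,I+J)$, which is far weaker than $[E(\Phi,I),E(\Phi,J)]$ and does nothing to dispose of the correction factors. More seriously, you misidentify how the ``no ${\Bbb F}_{\!2}$'' hypothesis enters. It is \emph{not} used via Lemma~\ref{Lem:2} (perfectness plays no role in this proof). The paper uses it arithmetically: comparing $[x_\beta(\theta\xi),x_\gamma(\zeta)]$ with $[x_\beta(\xi),x_\gamma(\theta\zeta)]$ isolates a factor $z_\alpha((\theta^2-\theta)\xi\zeta^2,\eta)$, and the absence of ${\Bbb F}_{\!2}$ as a residue field is exactly what forces the ideal generated by all $\theta^2-\theta$ to be $R$, yielding $z_\alpha(\xi\zeta^2,\eta)\in H$ and $z_\alpha(\xi^2\zeta,\eta)\in H$. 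Your sketch never produces these ``square'' elements, and without them the $\C_l$ step does not close: applying the hypothesis $\theta\in\theta^2R+2\theta R$ with $\theta=\xi\zeta$ leaves you needing $x_\alpha(u(\xi\zeta)^2)\in H$ for a long root $\alpha$, which is the same problem you started with, not a ``deeper layer''. The paper instead takes $\theta=\xi$, writes $\xi\zeta=\xi^2(a\zeta)+2(b\xi)\zeta$, and feeds the two summands into the already-established inclusions for $z_\alpha(\xi^2\zeta',\eta)$ and $z_\alpha(2\xi'\zeta,\eta)$.

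There is also a strategic difference worth noting. You propose to first capture $x_\alpha(\xi\zeta)$ and then separately ``upgrade'' to the $E(\Phi,R)$-conjugates. The paper avoids this two-stage structure by working directly with the Stein--Tits--Vaserstein generators $z_\alpha(\xi\zeta,\eta)$: one conjugates the identity $[x_\beta(\xi),x_\gamma(\zeta)]\in H$ by $x_{-\alpha}(\eta)$, pushes the conjugation inside the bracket, and checks via the commutator formula that ${}^{x_{-\alpha}(\eta)}x_\beta(\xi)\in E(\Phi,I)$ and ${}^{x_{-\alpha}(\eta)}x_\gamma(\zeta)\in E(\Phi,J)$. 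This is precisely your ``re-run the Chevalley expansion on conjugated root unipotents'', but doing it from the outset rather than as an afterthought is what makes the case analysis (five bullets: $\A_2$, $\C_2$ long, $\C_2$ short, $\C_l$ long, $\G_2$ short) tractable.
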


In the cases $\Phi=\C_2$ and $\Phi=\G_2$, the left-most inclusion
may fail without these additional assumptions. For rings having
residue field ${\Bbb F}_{\!2}$, this was observed in connection with
Lemma~\ref{Lem:2}. Without additional assumption, in the case
$\Phi=\C_2$ one can only claim that
$$ E(\Phi,R,IJ,I\supb{2}J+2IJ+I\supb{2}2)\le\big
[E(\Phi,R,I),E(\Phi,R,J)\big], $$
\noindent
In other words, one has to distinguish the levels of short and long
roots, and modify the generating sets in Theorems~\ref{The:2} and~\ref{The:3} accordingly.
Since the case of $\Sp(4,R)$ requires a separate analysis anyway,
in the present paper we do not pursue this any further, not to
overcharge the statements and proofs with such technical details,
peculiar for this particular case.

\begin{proof}
All inclusions, apart from the left-most one, are either obvious,
or established in \cite{RNZ2}, Lemmas~17 and~19. However, in
the proof of \cite{RNZ2}, Lemma~17, we only demonstrated a weaker
inclusion
$$ E(\Phi,R,IJ)\le\big[E(\Phi,R,I),E(\Phi,R,J)\big]. $$
\noindent
Thus, it remains to verify that $E(\Phi,R,IJ)$ is contained already
in $\big[E(\Phi,I),E(\Phi,J)\big]$. This can be done along the same
lines, as in \cite{RNZ2}, but requires some extra care. For the sake of brevity, in the sequel we denote $\big[E(\Phi,I),E(\Phi,J)\big]$ by $H$.
\par
Namely, it suffices to show that the generators
of the group $E(\Phi,R,IJ)$ belong to $ H$.
Indeed, let
$$ z_{\a}(\xi\zeta,\eta)={}^{x_{-\a}(\eta)}x_{\a}(\xi\zeta),\qquad
\xi\in I,\quad\zeta\in J,\quad\eta\in R, $$
\noindent
be one of these generators. However, in \cite{RNZ2} the right hand
side of the inclusion was already normal  in $E(\Phi,R)$, so that
we merely had to verify that
$x_{\a}(\xi\zeta)\in H$. Here, at this
stage we do not yet know that the right hand side is normal --- this
is exactly what we are attempting to prove! Thus, now at each step we
have to ascertain that all occurring extra factors are actually
contained in $ H$.
\par\smallskip
$\bullet$ First, assume that $\a$ can be embedded in a root
system of type $\A_2$. Then there exist roots $\beta,\gamma\in\Phi$,
of the same length as $\a$ such that $\a=\beta+\gamma$, and
$N_{\beta\gamma 11}=1$. By the above, we can express
$x_{\a}(\xi\zeta)$ as
$$ x_{\a}(\xi\zeta)=[x_{\beta}(\xi),x_{\gamma}(\zeta)]\in
 H. $$
\noindent
Thus,
\begin{multline*}
z_{\a}(\xi\zeta,\eta)=
{}^{x_{-\a}(\eta)}[x_{\beta}(\xi),x_{\gamma}(\zeta)]=
[{}^{x_{-\a}(\eta)}x_{\beta}(\xi),{}^{x_{-\a}(\eta)}x_{\gamma}(\zeta)]=\\
[x_{\beta}(\xi)x_{-\gamma}(\pm\xi\eta),
x_{\gamma}(\zeta)x_{-\beta}(\pm\zeta)\eta]\in
 H,
\end{multline*}
\noindent
as claimed.
\par
This proves the lemma for simply laced Chevalley groups, and for the
Chevalley group of type $\F_4$. It also proves necessary inclusions for
{\it short\/} roots in Chevalley groups of type $\C_l$, $l\ge 3$,
for {\it long\/} roots in Chevalley groups of type $\B_l$, $l\ge 3$,
and for {\it long\/} roots in the Chevalley group of type $\G_2$.
\par
For the remaining cases, the idea of the proof is similar, but
requires more care, due to the more complicated form of the
Chevalley commutator formula.
\par\smallskip
$\bullet$ Next, assume that $\a$ can be embedded in a root
system of type $\C_2$ as a {\it long\/} root. We wish to prove that
$z_{\a}(\xi\zeta,\eta)\in H$. As the first
approximation, we prove that
$$ z_{\a}(\xi^2\zeta,\eta),z_{\a}(\xi\zeta^2,\eta)\in
 H. $$
\noindent
There exist roots $\beta,\gamma\in\Phi$, such that $\a=\beta+2\gamma$
and $N_{\beta\gamma 11}=1$. Clearly, in this case $\beta$ is long and
$\gamma$ is short. Take an arbitrary $\theta\in R$. Then
$$ [x_{\beta}(\theta\xi),x_{\gamma}(\zeta)]=
x_{\beta+\gamma}(\theta\xi\zeta)x_{\a}(\pm \theta\xi\zeta^2)\in
 H, $$
\noindent
whereas
$$ [x_{\beta}(\xi),x_{\gamma}(\theta\zeta)]=
x_{\beta+\gamma}(\theta\xi\zeta)x_{\a}(\pm \theta^2\xi\zeta^2)\in
 H. $$
\noindent
Comparing these two inclusions we can conclude that
\begin{multline*}
z_{\a}((\theta^2-\theta)\xi\zeta^2,\eta)=
{}^{x_{-\a}(\eta)}[x_{\beta}(\xi),x_{\gamma}(\theta\zeta)]\cdot
{}^{x_{-\a}(\eta)}[x_{\gamma}(\zeta),x_{\beta}(\theta\xi)]=\\
[{}^{x_{-\a}(\eta)}x_{\beta}(\xi),{}^{x_{-\a}(\eta)}x_{\gamma}(\theta\zeta)]\cdot
[{}^{x_{-\a}(\eta)}x_{\gamma}(\zeta),{}^{x_{-\a}(\eta)}x_{\beta}(\theta\xi)]=\\
[x_{\beta}(\xi),x_{\gamma}(\theta\zeta)x_{-\beta-\gamma}(\pm\theta\zeta\eta)
x_{-\beta}(\pm\eta\theta^2\zeta^2)]\cdot\\
[x_{\gamma}(\zeta)x_{-\beta-\gamma}(\pm\zeta\eta)
x_{-\beta}(\pm\eta\zeta^2),x_{\beta}(\theta\xi)]\in
 H.
\end{multline*}
\noindent
By assumption $R$ does not have residue field of $2$ elements,
and thus the ideal generated by $\theta^2-\theta$, where $\theta\in R$,
is not contained in any maximal ideal, and thus coincides with $R$.
Now, since $z_{\a}(\lambda+\mu,\eta)=z_{\a}(\lambda,\eta)z_{\a}(\mu,\eta)$,
while $I$ and $J$ are ideals of $R$, it
follows that $z_{\a}(\xi\zeta^2,\eta)\in H$.
Interchanging $\xi$ and $\zeta$ we see that
$z_{\a}(\xi^2\zeta,\eta)\in H$.
\par
Now, we can pull the left bootstrap, before returning again to
the right one.
\par\smallskip
$\bullet$ Namely, assume that $\a$ can be embedded in a root system of
type $\C_2$ as a {\it short\/} root. Choose $\beta$ and $\gamma$
subject to the same condition, as in the preceding item. In other words,
$\a=\beta+\gamma$, $\beta$ is long, $\gamma$ is short, and
$N_{\beta\gamma 11}=1$. Then, clearly
$$ z_{\a}(\xi\zeta,\eta)={}^{x_{-\a}(\eta)}
[x_{\beta}(\xi),x_{\gamma}(\zeta)]
\cdot {}^{x_{-\a}(\eta)}x_{\alpha+\gamma}(\mp\xi\zeta^2). $$
\noindent
Pulling conjugation by $x_{-\a}(\eta)$ inside the first commutator,
we get
$$ [{}^{x_{-\a}(\eta)}x_{\beta}(\xi),{}^{x_{-\a}(\eta)}x_{\gamma}(\zeta)]=
[x_{\beta}(\xi)x_{-\gamma}(\pm\xi\eta)x_{-\alpha-\gamma}(\pm\xi\eta^2),
x_{\gamma}(\zeta)x_{-\beta}(\pm\zeta\eta)]
\in H. $$
\noindent
On the other hand, from the previous item we already know that
$x_{\alpha+\gamma}(\mp\xi\zeta^2)$ can be expressed as a product
of {\it some\/} commutators of the form
$[x_{\beta}(\rho),x_{\gamma}(\sigma)]$, where $\rho\in I$,
$\sigma\in J$, {\it provided\/} that $R$ does not have residue
field of $2$ elements. Exactly the same calculation as in the
above display line shows that the conjugates of these commutators
by $x_{-\a}(\eta)$ remain inside $ H$.
Observe, that from the first item, we already know that for
$\Phi=\B_l$, $l\ge 3$, even the stronger inclusion
$z_{\a+\gamma}(\xi\zeta,\eta)\in H$
holds without any such assumption.
\par
Thus, in both cases we can conclude that
$z_{\a}(\xi\zeta,\eta)\in H$
for a {\it short\/} root $\alpha$. Again, already from the first item
we know that for $\Phi=\C_l$, $l\ge 3$, this inclusion holds without
any assumptions on $R$.
\par\smallskip
On the other hand, a {\it long\/} root $\alpha$ of $\Phi=\C_l$,
$l\ge 3$, cannot be embedded in an irreducible rank 2 subsystem
other than $\C_2$. This leaves us with the analysis of exactly two
rank 2 cases: $\Phi=\C_2$ and $\a$ is long and $\Phi=\G_2$ and $\a$
is short. This is where one needs the additional assumptions on $R$.
We pull the right bootstrap once more, now more tightly.
\par\smallskip
$\bullet$ Let $\Phi=\C_2$ and $\a$ is long. Then $\a$ can be expressed
as $\a=\beta+\gamma$ for two {\it short\/} roots $\beta$ and $\gamma$.
Interchanging $\beta$ and $\gamma$ we can assume that
$N_{\beta\gamma 11}=2$. Then one has
$$ x_{\a}(2\xi\zeta)=[x_{\beta}(\xi),x_{\gamma}(\zeta)]\in
 H. $$
\noindent
Thus, we can conclude that
\begin{multline*}
z_{\a}(2\xi\zeta,\eta)=
[{}^{x_{-\a}(\eta)}x_{\beta}(\eta),{}^{x_{-\a}(\eta)}x_{\gamma}(\zeta)]=\\
[x_{\beta}(\xi)x_{-\gamma}(\pm\xi\eta)x_{\beta-\gamma}(\pm\xi^2\eta),
x_{\gamma}(\zeta)x_{-\beta}(\pm\zeta\eta)
x_{\gamma-\beta}(\pm\zeta^2\eta)]\in
 H.
\end{multline*}
\noindent
One the other hand, from the second item we already know that
$z_{\a}(\xi^2\zeta)\in H$.
Since by assumption the ideal generated by $2\xi$ and $\xi^2$
contains $\xi$, no exactly the same arhument as in the second
item shows that $z_{\a}(\xi,\zeta,\eta)\in H$.
\par\smallskip
$\bullet$ Finally, let $\Phi=\G_2$ and $\a$ is short. We can argue
essentially in the same way as for the case of $\Phi=\C_2$.
In a sense, it is slightly more complicated due to the fancier
form of the Chevalley commutator formula. On the other hand,
overall it is even easier, since we already have the necessary
inclusions for {\it long\/} roots.
\par
Again, as the first approximation, we prove that
$z_{\a}(\xi^2\zeta,\eta),z_{\a}(\xi\zeta^2,\eta)\in H$.
With this end, express $\a$
as $\a=\beta+2\gamma$, where $\beta$ is short, $\gamma$ is long,
and $N_{\beta\gamma 11}=1$. Take an arbitrary $\theta\in R$. Then
$$ [x_{\beta}(\theta\xi),x_{\gamma}(\zeta)]=
x_{\beta+\gamma}(\theta\xi\zeta)x_{\a}(\pm \theta^2\xi^2\zeta)
x_{3\beta+\gamma}(\pm\theta^3\xi^3\zeta)
x_{3\beta+2\gamma}(\pm\theta^3\xi^3\zeta^2)\in
 H, $$
\noindent
whereas
$$ [x_{\beta}(\xi),x_{\gamma}(\theta\zeta)]=
x_{\beta+\gamma}(\theta\xi\zeta)x_{\a}(\pm \theta\xi^2\zeta)
x_{3\beta+\gamma}(\pm\theta\xi^3\zeta)
x_{3\beta+2\gamma}(\pm\theta^2\xi^3\zeta^2)\in
 H. $$
\noindent
Since the roots $3\beta+\gamma$ and $3\beta+2\gamma$ are long, from
the first item we already know that the corresponding root elements
belong to $ H$, and, in fact, are already
expressed as commutators $[x_{\phi}(\rho),x_{\psi}(\sigma)]$,
where $\rho\in I$ and $\sigma\in J$, of two {\it long\/} root
elements in $E(\A_2,I)$ and $E(\A_2,J)$. Since $\a$ is {\it short\/},
by the usual argument, conjugation by $x_{-\a}(\eta)$ does not
move such a commutator out of $ H$.
\par
This means, that when comparing the above inclusions, as we did in
the second item above, we can ignore all overhanging long root
factors, and argue modulo the subgroup $[E(\A_2,I),E(\A_2,J)]$.
This leaves us with the inclusion
$$ x_{\a}(\pm(\theta^2-\theta)\xi\zeta)\in
[x_{\beta}(\xi),x_{\gamma}(\theta\zeta)]
[x_{\gamma}(\zeta),x_{\beta}(\theta\xi)]\big[E(\A_2,I),E(\A_2,J)\big]. $$
\noindent
Conjugating this inclusion by $x_{-\a}(\eta)$, carrying the
congujation inside the commutator, and observing that $\a$ is
distinct from $\b$ and $\g$, we can in the usual way conclude that
$z_{\alpha}(\pm(\theta^2-\theta)\xi^2\zeta,\eta)\in H$,
for all $\theta\in R$. Again, since $R$ does not have residue field
of two elements, we can repeat the same argument as in the second
item to derive that $z_{\alpha}(\xi^2\zeta,\eta)\in H$.
Interchanging $\xi$ and $\zeta$, we see that the same holds also
for $z_{\a}(\xi\zeta^2,\eta)$.
\par
To conclude the proof it only remains to look at another short root.
Namely, set $\a=\beta+\gamma$, for the same roots $\beta$ and
$\gamma$, as above. Looking at the commutator
$$ [x_{\beta}(\xi),x_{\gamma}(\zeta)]=
x_{\a}(\xi\zeta)x_{2\beta+\gamma}(\pm \xi^2\zeta)
x_{3\beta+\gamma}(\pm \xi^3\zeta)x_{3\beta+2\gamma}(\pm \xi^3\zeta^2)\in H, $$
\noindent
and carrying all factors, apart from the first one, from the right
hand side to the left hand side, we get an expression of $x_{\a}(\xi\zeta)$
as a product of commutators of the form $[x_{\phi}(\rho),x_{\psi}(\sigma)]$,
where either $\{\phi,\psi\}=\{\beta,\gamma\}$, or else both
$\phi$ and $\psi$ are long. As we have already observed, conjugation
by $x_{-\a}(\eta)$ does not move such a commutator out of $H$.
This means that $z_{\a}(\xi\zeta,\eta)\in H$, as claimed.
\end{proof}


\section{Proof of Theorem~\ref{The:2}}\label{sec:3}

As a group, the relative commutator subgroup $\big[E(\Phi,R,I),E(\Phi,R,J)\big]$
is generated by the commutators $[x,y]$, where $x\in E(\Phi,R,I)$
and $y\in E(\Phi,R,J)$. By Theorem~\ref{The:1} we can express $x$ as a
product of generators $z_{\a}(\xi,\eta)$, where $\a\in\Phi$,
$\xi\in I$, $\eta\in R$. Similarly, we can express $y$ as a
product of generators $z_{\b}(\zeta,\theta)$, where $\b\in\Phi$,
$\zeta\in I$, $\theta\in R$.
\par
Now, iterated application of commutator identities C1 and C2
implies that as a group
$\big[E(\Phi,R,I),E(\Phi,R,J)\big]$ is generated
by the conjugates ${}^g[z_{\a}(\xi,\eta),z_{\b}(\zeta,\theta)]$,
where $g\in E(\Phi,R)$, whereas $\a,\b,\xi,\zeta,\eta,\theta$
have the same sense, as above.
\par
Next, observe that
$$
{\vphantom{\big[]}}^g\big[z_{\a}(\xi,\eta),z_{\b}(\zeta,\theta)\big]=
{\vphantom{\big[]}}^g\big[{}^{x_{-\a}}(\eta)x_{\a}(\xi),z_{\b}(\zeta,\theta)]=
{\vphantom{\big[]}}^{gx_{-\a}(\eta)}\big[x_{\a}(\xi),
{}^{x_{-\a}(-\eta)}z_{\b}(\zeta,\theta)]. $$
\noindent
Since $E(\Phi,R,J)$ is normal in $E(\Phi,R)$, it follows that
${}^{x_{-\a}(-\eta)}z_{\b}(\zeta,\theta)$ can be expressed as a
product of generators $z_{\g}(\lambda,\rho)$, where $\g\in\Phi$,
$\lambda\in J$, $\rho\in R$.
\par
Again, iterated application of commutator identities C1 and C2
implies that $\big[E(\Phi,R,I),E(\Phi,R,J)\big]$ is generated
by a smaller set of conjugates ${}^g[x_{\a}(\xi),z_{\b}(\zeta,\theta)]$,
where $g\in E(\Phi,R)$, whereas $\a,\b,\xi,\zeta,\theta$
have the same sense, as above.
\par
Thus, it suffices to prove that the commutators
$[x_{\a}(\xi),z_{\b}(\zeta,\theta)]$ are expressed as products
of {\it conjugates\/} of elements from $X$. Now, the proof depends
on the mutual position of the roots $\a$ and $\b$.
\par\smallskip
$\bullet$ When $\a=\b$, there is nothing to prove, since these
commutators themselves belong to $X$. These are generators of the
first type.
\par\smallskip
$\bullet$ When $\a=-\b$, one has
$$ \big[x_{\a}(\xi),z_{-\a}(\zeta,\theta)\big]=
\big[x_{\a}(\xi),{}^{x_{\a}(\theta)}x_{-\a}(\zeta)\big]=
{}^{x_{\a}(\theta)}\big[x_{\a}(\xi),x_{-\a}(\zeta)\big]. $$
\noindent
In other words, these commutators are themselves conjugates of
individual elements of $X$, namely, of the generators of the
second type.
\par\smallskip
Thus, in the sequel we may assume that $\a\neq\pm\b$.
\par\smallskip
$\bullet$ If $\a$ and $\b$ are strictly orthogonal, in other
words, $\a\pm\b\notin\Phi$, then
$$ [x_{\a}(\xi),x_{\b}(\zeta)\big]=[x_{\a}(\xi),x_{-\b}(\theta)\big]=e, $$
\noindent
and thus also $[x_{\a}(\xi),z_{\b}(\zeta,\theta)]=e$.
\par\smallskip
$\bullet$ Thus, we may assume that $\a$ and $\b$ generate an irreducible
root system $\Delta$ of rank 2, viz.\ $\A_2$, $\B_2$ or $\G_2$. Thus,
it suffices to prove the desired fact for Chevalley groups of rank 2.
\par
Clearly, we may choose an order on $\Delta$ in such a way that either
$\b$ or $-\b$ is a fundamental root, whereas $\a\in\Delta^+$. Consider
the [maximal] parabolic subgroup $P_{\b}\le G(\Delta,R)$, corresponding
to the parabolic set $\Delta^+\cup\{\pm\b\}$. Then $z_{\b}(\zeta,\theta)$
belongs to the elementary subgroup $\langle X_{\b},X_{-\b}\rangle$ of
its Levi subgroup $L_{\b}=G_{\b}$. On the other hand, $x_{\a}(\xi)$
belongs to its unipotent radical
$$ U_{\b}=\langle U_{\g},\ \g\in\Delta^+, \g\neq\pm\b\rangle. $$
\par
Recalling that by Lemma~\ref{Lem:7} the unipotent radical $U_{\b}$ is a normal
subgroup of $P_{\b}$, we see that
$$ [x_{\a}(\xi),z_{\b}(\zeta,\theta)]\in [U_{\b},L_{\b}]\le
U_{\b}\le U(\Delta,R). $$
\par
On the other hand, Lemma~\ref{Lem:2} implies that
$$ [x_{\a}(\xi),z_{\b}(\zeta,\theta)]\in G(\Delta,R,IJ). $$
\par
Combining these inclusions, we see that
$$ [x_{\a}(\xi),z_{\b}(\zeta,\theta)]\in
G(\Delta,R,IJ)\cap U(\Delta,R). $$
\noindent
By Lemma~\ref{Lem:5} this intersection is contained in $E(\Delta,IJ)$. In
particular, commutators of this type are expressed as products of
elements $x_{\g}(\lambda\mu)$, where $\g\in\Delta^+$, $\lambda\in I$,
$\mu\in J$. But this is precisely the third type of generators.
\par
Conversely, Lemma~\ref{The:4} implies that these generators belong to
the mixed commutator subgroup $\big[E(\Phi,R,I),E(\Phi,R,J)\big]$.
This finishes the proof of Theorem~\ref{The:2}.


\section{Proof of Theorem~\ref{The:3} and some corollaries}\label{sec:4}

It turns out, that Theorem~\ref{The:3} easily follows from Theorem~\ref{The:2},
modulo other known results on relative commutator subgroups.

\begin{proof}[{Proof of Theorem~$\ref{The:3}$}]
Clearly, $X\subseteq Y$, so that by Theorem~\ref{The:2} the set $Y$ generates
$\big[E(\Phi,R,I),E(\Phi,R,J)\big]$ as a {\it normal\/} subgroup
of $E(\Phi,R)$. Therefore, it suffices to show that conjugates
of the elements of $Y$ above generators are themselves products of
elements of $Y$. Let $g\in Y$ and let $h\in E(\Phi,R)$. By
Theorem~\ref{The:4}
one has $g\in G(\Phi,R,IJ)$. Now applying Lemma~\ref{Lem:2} we see that
$$ [h,g]\in[G(\Phi,R,IJ),E(\Phi,R)]=E(\Phi,R,IJ). $$
\noindent
Thus, $hgh^{-1}=gz$, for some $z\in E(\Phi,R,IJ)$. Invoking Theorem~\ref{The:1}
we see that $hgh^{-1}$ is the product of $g$ and some
$z_{\a}(\xi\zeta,\eta)$, where $\a\in\Phi$, $\xi\in I$, $\zeta\in J$,
$\eta\in R$. All of these factors belong to $Y$. This completes the proof.
\end{proof}

A closer look at the generators in Theorem~\ref{The:3} shows that all of them
in fact belong already to $\big[E(\Phi,I),E(\Phi,R,J)\big]$. By
symmetry, we may switch the role of factors. In particular, this
means that Theorem~\ref{The:3} implies the following curious fact.

\begin{Cor}\label{Cor:1}
Under assumptions of Theorem~\ref{The:3} one has
$$ \big[E(\Phi,I),E(\Phi,R,J)\big]=\big[E(\Phi,R,I),E(\Phi,J)\big]=
\big[E(\Phi,R,I),E(\Phi,R,J)\big]. $$
\end{Cor}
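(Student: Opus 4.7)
The plan is to follow the hint given immediately before the statement: verify that each of the three generators of $\big[E(\Phi,R,I),E(\Phi,R,J)\big]$ listed in Theorem~\ref{The:3} already lies in the (a priori smaller) subgroup $\big[E(\Phi,I),E(\Phi,R,J)\big]$. Once this is established, we get the inclusion $\big[E(\Phi,R,I),E(\Phi,R,J)\big]\le\big[E(\Phi,I),E(\Phi,R,J)\big]$, and the reverse inclusion is immediate from $E(\Phi,I)\le E(\Phi,R,I)$. By the symmetry $[F,H]=[H,F]$, interchanging the roles of $I$ and $J$ yields the middle equality as well.

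First I would handle the two easy commutator generators. For the type~(i) generator $\big[x_{\alpha}(\xi),z_{\alpha}(\zeta,\eta)\big]$ with $\xi\in I$, $\zeta\in J$, $\eta\in R$, the left entry $x_{\alpha}(\xi)$ lies in $E(\Phi,I)$ by definition, while the right entry $z_{\alpha}(\zeta,\eta)$ lies in $E(\Phi,R,J)$ by Theorem~\ref{The:1}. Hence the commutator belongs to $\big[E(\Phi,I),E(\Phi,R,J)\big]$. For the type~(ii) generator $\big[x_{\alpha}(\xi),x_{-\alpha}(\zeta)\big]$, the same remark applies, since $x_{-\alpha}(\zeta)\in E(\Phi,J)\le E(\Phi,R,J)$.

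The only point that requires a separate argument is the type~(iv) generator $z_{\alpha}(\xi\zeta,\eta)$. Here I invoke Theorem~\ref{The:4}, whose left-most inclusion (which is precisely where the extra hypotheses on $R$ in types $\C_l$ and $\G_2$ are used) gives
\[
z_{\alpha}(\xi\zeta,\eta)\in E(\Phi,R,IJ)\le\big[E(\Phi,I),E(\Phi,J)\big]\le\big[E(\Phi,I),E(\Phi,R,J)\big].
\]
This is the step that genuinely relies on the strengthened version of level calculations proved in \S\ref{sec:2}; without the sharper left-most inclusion of Theorem~\ref{The:4}, one would only land in $\big[E(\Phi,R,I),E(\Phi,R,J)\big]$, which would be circular.

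Putting the three cases together, every generator listed in Theorem~\ref{The:3} lies in $\big[E(\Phi,I),E(\Phi,R,J)\big]$, so
\[
\big[E(\Phi,R,I),E(\Phi,R,J)\big]\le\big[E(\Phi,I),E(\Phi,R,J)\big]\le\big[E(\Phi,R,I),E(\Phi,R,J)\big],
\]
forcing equality. Swapping $I$ and $J$ gives the other equality. I expect no real obstacle beyond correctly identifying that the third family of generators is handled by the refined Theorem~\ref{The:4} rather than by the commutator bracket directly.
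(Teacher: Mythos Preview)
Your proof is correct and follows exactly the approach the paper indicates in the sentence preceding the corollary: you verify that each of the three generator families from Theorem~\ref{The:3} lies in $\big[E(\Phi,I),E(\Phi,R,J)\big]$, with the first two being immediate and the third handled via the sharpened left-most inclusion of Theorem~\ref{The:4}. The paper itself gives no more detail than this, so your write-up is a faithful expansion of the intended argument.
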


Let us mention another amusing corollary of our results, which was
not noted before. Unlike the slightly larger commutators in the
previous corollary, it is not clear, why $ H$
should be equal to $\big[E(\Phi,R,I),E(\Phi,R,J)\big]$, but it is
at least normal in the absolute elementary group.

\begin{Cor}\label{Cor:2}
Under assumptions of Theorem~\ref{The:3} the mixed commutator subgroup\/
$\big[E(\Phi,I),E(\Phi,J)\big]$ is normal in\/ $E(\Phi,R)$.
\end{Cor}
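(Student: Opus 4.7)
The strategy is to sandwich $H=[E(\Phi,I),E(\Phi,J)]$ between two subgroups whose interaction with $E(\Phi,R)$ we already understand, namely the relative elementary subgroup $E(\Phi,R,IJ)$ and the principal congruence subgroup $G(\Phi,R,IJ)$. The crucial input is precisely Theorem~\ref{The:4}, whose statement (under exactly the hypotheses of Corollary~\ref{Cor:2}) furnishes the chain
$$ E(\Phi,R,IJ)\le H\le G(\Phi,R,IJ). $$

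Given this sandwich, the verification of normality reduces to a short, essentially formal, calculation. Fix $g\in E(\Phi,R)$ and an arbitrary $h\in H$. Since $ghg^{-1}=[g,h]\cdot h$, it suffices to show that $[g,h]\in H$. The right-hand inclusion of the sandwich places $h$ in $G(\Phi,R,IJ)$, so the standard commutator formula (Lemma~\ref{Lem:2}) gives
$$ [g,h]\in\big[E(\Phi,R),G(\Phi,R,IJ)\big]=E(\Phi,R,IJ). $$
The left-hand inclusion of the sandwich then places $[g,h]$ inside $H$, so $ghg^{-1}=[g,h]\cdot h\in H$, as required.

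I expect no genuine obstacle: all the real work is already present in Theorem~\ref{The:4}, whose strengthened leftmost inclusion $E(\Phi,R,IJ)\le H$ was carefully established in \S\ref{sec:2}. The hypotheses appearing in Corollary~\ref{Cor:2} are precisely those simultaneously required for Theorem~\ref{The:4} and for the standard commutator formula in the form used above (rank at least two; no residue field~$\mathbb{F}_{\!2}$ in the cases $\C_2,\G_2$; the technical ideal condition in the $\C_l$-case), so no additional hypothesis has to be imposed. The only conceptual point worth underlining is that the argument does \emph{not} proceed by pushing conjugation into the two factors $E(\Phi,I)$ and $E(\Phi,J)$ separately—neither of which is normal in $E(\Phi,R)$—but rather by transferring the whole burden to the two normal ``envelopes'' $E(\Phi,R,IJ)$ and $G(\Phi,R,IJ)$ provided by level considerations.
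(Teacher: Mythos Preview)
Your proof is correct and follows essentially the same approach as the paper: both use the sandwich $E(\Phi,R,IJ)\le H\le G(\Phi,R,IJ)$ from Theorem~\ref{The:4} together with the standard commutator formula (Lemma~\ref{Lem:2}) to conclude that $[E(\Phi,R),H]\le E(\Phi,R,IJ)\le H$. The only difference is cosmetic---the paper phrases the argument at the level of subgroups while you argue element-wise via $ghg^{-1}=[g,h]\cdot h$.
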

\begin{proof}
By Theorem~\ref{The:4}, we have
$$ E(\Phi,R,IJ)\le \big[E(\Phi,I),E(\Phi,J)\big]\le G(\Phi,R,IJ). $$
\noindent
By taking the commutator of each component of the above inclusion with
$E(\Phi, R)$, the standard commutator formula implies that
$$ \big[E(\Phi,R,IJ),E(\Phi,R)\big]=
\big[E(\Phi,R,IJ),E(\Phi, R)\big]=E(\Phi,R,IJ). $$
\noindent
It follows immediately that
$$ \Big[\big[E(\Phi,I),E(\Phi,J)\big],E(\Phi, R)\Big]=
E(\Phi,R,IJ)\le \big[E(\Phi,I),E(\Phi,J)\big]. $$
\noindent
Thus, $\big[E(\Phi,I),E(\Phi,J)\big]$ is normal in\/ $E(\Phi,R)$.
\end{proof}

\section{Final remarks}\label{sec:5}

The relative commutator subgroups $\big[E(\Phi,R,I),E(\Phi,R,J)\big]$
considered in the present paper seem to be a very interesting class
of subgroups. They occur unreasonably often in many seemingly
unrelated situations.
\par\smallskip
$\bullet$ For the general linear group these commutator subgroups,
as also the commutator subgroups of the principal congruence
subgroups and full congruence subgroups
$$ \big[\GL(n,R,I),\GL(n,R,J)\big],\quad
\big[\GL(n,R,I),C(n,R,J)\big],\quad
\big[C(n,R,I),C(n,R,J)\big], $$
\noindent
were first considered by Hyman Bass \cite{Bass2}, and then systematically
studied by Alec Mason and Wilson Stothers \cite{MAS3,Mason74,MAS1,MAS2}.
In particular, they established the formula
$$ \big[\GL(n,R,I),\GL(n,R,J)\big]=[E(n,R,I),E(n,R,J)], $$
\noindent
provided that $n\ge\sr(R)+1,3$.
\par\smallskip
$\bullet$ The following result, the relative standard commutator formula
is the main result of You Hong \cite{you92} and the main application of
the relative commutator calculus developed by us in \cite{RNZ2}.

\begin{The}[{You Hong, Hazrat---Vavilov---Zhang}]\label{The:5}
Let\/ $\Phi$ be a reduced irreducible root system,\/ $rk(\Phi)\ge 2$.
Further, let\/ $R$ be a commutative ring, and\/ $I,J\trianglelefteq R$
be two ideals of\/ $R$. Then
$$ [E(\Phi,R,I),C(\Phi,R,J)]=[E(\Phi,R,I),E(\Phi,R,J)]. $$
\end{The}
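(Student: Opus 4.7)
The inclusion $\big[E(\Phi,R,I),E(\Phi,R,J)\big]\subseteq\big[E(\Phi,R,I),C(\Phi,R,J)\big]$ is immediate from $E(\Phi,R,J)\le C(\Phi,R,J)$, so only the reverse inclusion requires work. The plan is first to establish that $\big[E(\Phi,R,I),E(\Phi,R,J)\big]$ is itself normal in $E(\Phi,R)$; this goes exactly as in Corollary~\ref{Cor:2}, using Theorem~\ref{The:4} to sandwich the mixed commutator subgroup between $E(\Phi,R,IJ)$ and $G(\Phi,R,IJ)$, and Lemma~\ref{Lem:1} to conclude $\big[E(\Phi,R),G(\Phi,R,IJ)\big]\le E(\Phi,R,IJ)$.

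Next, the reduction to elementary generators. Given $[x,y]$ with $x\in E(\Phi,R,I)$ and $y\in C(\Phi,R,J)$, Theorem~\ref{The:1} expresses $x$ as a product of $z_{\alpha_k}(\xi_k,\eta_k)$, $\xi_k\in I$. Iterated application of (C1) and (C2), together with the normality just established, reduces the problem modulo $\big[E(\Phi,R,I),E(\Phi,R,J)\big]$ to commutators of the shape $\big[z_{\alpha}(\xi,\eta),y\big]$. Applying (C3) and the normality of $C(\Phi,R,J)$ in $G(\Phi,R)$ (it is the full preimage of a normal subgroup under $\rho_J$), one further peels off the conjugation $x_{-\alpha}(\eta)$ to land on commutators $\big[x_{\alpha}(\xi),y'\big]$ with $\xi\in I$ and $y'\in C(\Phi,R,J)$.

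The heart of the proof is the claim that $\big[x_{\alpha}(\xi),y'\big]\in\big[E(\Phi,R,I),E(\Phi,R,J)\big]$. The strategy is a localization argument in the spirit of \cite{RNZ2}: for each maximal ideal $\gm$ of $R$, the image of $y'$ in $G(\Phi,R_\gm/J_\gm)$ is central, and over the local ring any such central element factors, modulo $E(\Phi,R_\gm,J_\gm)$, as $y'=tg$ with $t\in T(\Phi,R_\gm)$ satisfying $t^{\alpha}-1\in J_\gm$ for every root~$\alpha$. The identity
$$ \big[x_{\alpha}(\xi),tg\big]=\big[x_{\alpha}(\xi),t\big]\cdot{}^{t}\big[x_{\alpha}(\xi),g\big] $$
then splits the calculation: the torus factor equals $x_{\alpha}\big((1-t^{\alpha})\xi\big)$ with $(1-t^{\alpha})\xi\in IJ$, hence belongs to $E(\Phi,R,IJ)\le\big[E(\Phi,R,I),E(\Phi,R,J)\big]$ by Theorem~\ref{The:4}; the elementary factor is literally a commutator of the required shape, and torus conjugation preserves relative elementary subgroups. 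A standard local-global principle then descends the conclusion to $R$.

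The principal obstacle lies in this final gluing step, which is the technical engine of \cite{RNZ2}. The delicate point is that whereas naive level bookkeeping for $[x,y]$ only delivers membership in $E(\Phi,R,I)\cap G(\Phi,R,J)$, the centrality of $C(\Phi,R,J)$ modulo~$J$ is what produces the crucial level upgrade from~$I\cap J$ to~$IJ$, thereby placing the commutator inside $\big[E(\Phi,R,I),E(\Phi,R,J)\big]$ rather than merely inside the coarser intersection $E(\Phi,R,I)\cap E(\Phi,R,J)$.
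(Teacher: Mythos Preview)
The paper does not supply its own proof of Theorem~\ref{The:5}; the result is quoted in \S\ref{sec:5} as known, with the Chevalley-group case attributed to \cite{you92} and to the relative commutator calculus developed in \cite{RNZ2}, where the argument proceeds by localisation. So there is no in-paper proof to compare against beyond that attribution. Your sketch is in the spirit of the localisation approach and correctly identifies \cite{RNZ2} as the place where the technical engine lives.

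Two points in your outline should be corrected, however. First, the detour through Corollary~\ref{Cor:2} is misplaced: that corollary treats the \emph{non-relativised} commutator $\big[E(\Phi,I),E(\Phi,J)\big]$, whose factors are not normal. For $\big[E(\Phi,R,I),E(\Phi,R,J)\big]$ normality in $E(\Phi,R)$ is immediate from Lemma~\ref{Lem:1}, since the mutual commutator of two normal subgroups is normal. Second, and more substantively, you invoke the inclusion $E(\Phi,R,IJ)\le\big[E(\Phi,R,I),E(\Phi,R,J)\big]$ from Theorem~\ref{The:4} to dispose of the torus contribution $x_{\alpha}\big((1-t^{\alpha})\xi\big)$. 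But Theorem~\ref{The:4} carries side hypotheses---no residue field ${\Bbb F}_{\!2}$ when $\Phi=\C_2,\G_2$, and $\theta\in\theta^2R+2\theta R$ when $\Phi=\C_l$---which are \emph{absent} from the statement of Theorem~\ref{The:5}. If your route genuinely needs that inclusion, it does not establish Theorem~\ref{The:5} in the generality claimed. Relatedly, the asserted local factorisation $y'=tg$ with $g\in E(\Phi,R_{\gm},J_{\gm})$ is not obviously available: it would require the relative $\K_1$ to vanish over $R_{\gm}$, which is not assumed. The actual commutator calculus in \cite{RNZ2} handles these steps more directly than your sketch suggests.
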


For the general linear group, we have three different proofs of this
formula, based on decomposition of unipotents \cite{NVAS},
localisation \cite{RZ11}, and on level calculations \cite{NVAS2}.
Later, we generalised the last two of these proofs to Bak's unitary
groups \cite{RNZ1} and to Chevalley groups \cite{RNZ2}.

\par\smallskip
$\bullet$ Let us state an amazing application of Theorem~\ref{The:5} and our
main Theorem~\ref{The:3}, which is due to Alexei Stepanov \cite{stepanov10},
see also \cite{yoga2,portocesareo}.

\begin{The}[{Stepanov}]\label{The:6}
Let\/ $R$ be a commutative ring with\/ $1$ and let\/ $I,J\unlhd R$,
be ideals of\/ $R$. There exists a natural number\/ $N=N(\Phi)$
depending on\/ $\Phi$ alone, such that any commutator
$$ [x,y],\qquad x\in G(\Phi,R,I),\quad y\in E(\Phi,R,J) $$
\noindent
is a product of not more that\/ $N$ elementary generators
listed in Theorem~$\ref{The:3}$.
\end{The}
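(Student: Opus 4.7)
The plan is to prove Theorem 6 in two stages: first verifying that the commutator $[x,y]$ actually lies in $[E(\Phi,R,I),E(\Phi,R,J)]$ (so that the generators of Theorem 3 form an \emph{a priori} spanning set), then extracting a $\Phi$-dependent bound on the number of factors.

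For the containment, I would invoke Theorem 5 in the form $[C(\Phi,R,J),E(\Phi,R,I)]=[E(\Phi,R,J),E(\Phi,R,I)]$ together with the obvious inclusion $G(\Phi,R,I)\le C(\Phi,R,I)$. Combining these, together with the symmetry $[A,B]=[B,A]$, gives
\[
[x,y]\in[G(\Phi,R,I),E(\Phi,R,J)]\subseteq[C(\Phi,R,I),E(\Phi,R,J)]=[E(\Phi,R,I),E(\Phi,R,J)].
\]
By Theorem 3, this subgroup is generated by elements of the three advertised types, so $[x,y]$ is \emph{some} (possibly very long) word in them.

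For the bounded-width statement, the strategy is Stepanov's universal localisation method. One introduces the universal ring $\mathcal{R}$ on formal generators of $x$ and $y$ (using Theorem 1 to write $y$ as a product of $z_\beta(\zeta,\theta)$'s, and using a fixed generating expression for $x$), so that it suffices to bound the length once and for all in this generic situation; any instance descends by specialisation. To produce the bound over $\mathcal{R}$, one passes to localisations $\mathcal{R}_s$ at elements $s$ for which the relevant root subgroups admit controlled decompositions (Gauss or Bruhat). Over such a semi-local situation, the commutator $[x,y]$ admits a fixed-length factorisation in root elements; a patching argument, as in Stepanov's localisation-completion machinery, then combines finitely many such local bounded expressions into a global one. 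The number of patches is uniform since the co-rank of the patching cover can be controlled by $|\Phi|$, and each local piece is processed by the commutator identities (C1)--(C4), the Chevalley commutator formula (R2), and the semidirect decomposition of the minimal parabolic from Lemma 7, each of which contributes only a $\Phi$-bounded number of factors.

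The main obstacle is precisely this uniformity: the input $x\in G(\Phi,R,I)$ has no \emph{a priori} bounded expression in elementary generators (indeed in general it does not lie in $E(\Phi,R,I)$ at all), so one cannot simply expand $x$ and push factors through by (C1)--(C4). The key insight to overcome this is that after commuting with $y$, the resulting element lies in $E(\Phi,R,IJ)$, a much more rigid subgroup, and the rigidity allows the localisation-patching argument to collapse the unbounded expression into a bounded product of the generators of Theorem 3. This collapse is where the specific shape of the generating set -- with two commutator-type generators carrying the ``size'' of the factors and one straight root-type generator absorbing the residual level-$IJ$ part -- is essential; a generating set without these three distinct flavours would not admit such a uniform bound.
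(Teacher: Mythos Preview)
The paper does not supply its own proof of Theorem~6: the result is attributed to Stepanov, with the proof residing in~\cite{stepanov10}, and the only hint given in the text is that ``the proof is based on the method of \emph{universal localisation} expressly developed by Stepanov.'' Your outline --- Theorem~5 for the containment, then Stepanov's universal localisation for the uniform bound --- matches this description at the level of resolution the paper itself offers, so there is essentially nothing to compare against.

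Two remarks on the sketch itself. First, you write that ``after commuting with $y$, the resulting element lies in $E(\Phi,R,IJ)$.'' That is not what Theorem~5 yields: the commutator lands in $[E(\Phi,R,I),E(\Phi,R,J)]$, which the paper stresses is in general \emph{strictly larger} than $E(\Phi,R,IJ)$. Indeed, the entire purpose of Theorems~2 and~3 is to produce the extra commutator-type generators needed to span that gap, and Stepanov's bound is phrased in terms of this enlarged generating set, not merely the $z_\alpha(\xi\zeta,\eta)$'s of $E(\Phi,R,IJ)$. Second, your description of the universal ring as built ``on formal generators of $x$ and $y$\ldots using a fixed generating expression for $x$'' is in tension with your own observation that $x\in G(\Phi,R,I)$ admits no bounded elementary expression; in Stepanov's setup the universality for $x$ is at the level of the affine group scheme $G(\Phi,-,I)$, not via a word in root elements. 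These are refinements rather than fatal gaps, but they would need to be straightened out before the sketch becomes a proof.
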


Quite remarkably, the bound $N$ in this theorem does not depend
either on the ring $R$, or on the choice of the ideals $I,J$.
The proof is based on the method of {\it universal localisation\/}
expressly developed by Stepanov \cite{stepanov10} to eliminate
any dependence on the dimension of $R$. Before that, even in the
absolute case all known bounds depended on dimension of the
ground ring, see \cite{SV10,portocesareo} and references there.

\par\smallskip
$\bullet$ Initially, one of the main motivations to consider relative
commutator subgroups occurred in the study of subgroups of classical
groups, normalised by a relative elementary subgroup. For the
general linear group, this problem was studied by John Wilson, Anthony
Bak, Leonid Vaserstein, Li Fuan and Liu Mulan, and the second author,
and in the context of unitary groups, by G\"unter Habdank, the third
author, and You Hong, one may find the bibliography in our survey
\cite{RN} and in the conference papers \cite{yoga,yoga2}. For
Chevalley groups this problem is still not solved, and it would be
a very interesting application.

\par\smallskip
$\bullet$ Another amazing fact, which follows from the {\it multiple\/}
relative commutator formula established for classical groups in
\cite{RZ12,RNZ3}, is that multiple commutators of relative subgroups
are reduced to double such commutators. More precisely, let $R$ be
a commutative ring with $1$ and let $I_i\unlhd R$, $i=1,\ldots,m$,
be ideals of $R$. Consider an arbitrary configuration of
brackets $[\![\ldots]\!]$ and assume that the outermost pairs
of brackets meet between positions $h$ and $h+1$. Then one has
\begin{multline*}
[\![E(\Phi,R,I_1),E(\Phi,R,I_2),\ldots,E(\Phi,R,I_m)]\!]=\\
[E(\Phi,R,I_1\ldots I_h),E(\Phi,R,I_{h+1}\ldots I_m)].
\end{multline*}
For Chevalley groups this result is not yet published, but it holds
as stated, and can be obtained either by imitating the methods of
\cite{RZ12,RNZ3}, or by Stepanov's universal localisation
\cite{stepanov10}.
\par\smallskip
$\bullet$ For {\it finite dimensional\/} rings, one has even stronger
{\it general\/} multiple commutator formulas, which are simultaneous
generalisations of the above Mason---Stothers formula, and nilpotency
of relative $\K_1$. In these formulas, elementary subgroups on the
left hand side replaced by the Chevalley groups themselves, provided
that $m>\dim(\Max(R))$. The definitive proofs are only published
for the general linear group, see \cite{HSVZmult}.
\par\smallskip
$\bullet$
Another very interesting occurence of the relative commutator groups
is relative splitting. In general, the intersection
$G(\Phi,R,I)\cap E(\Phi,R,J)$ of the principal congruence subgroup
modulo one ideal with the relative elementary subgroup modulo another
ideal, is very hard to describe. However, there are some instances,
when one could say more. Let us state the relative splitting principle
discovered by Himanee Apte and Alexei Stepanov, \cite{AS},
Lemma~\ref{Lem:4}. Let $I$ be a splitting ideal of an associative
ring $R$, $R=I\oplus A$, and let $J\unlhd R$ be an ideal of
$R$ generated by an ideal $K\unlhd A$. Then
$$ G(\Phi,R,I)\cap E(\Phi,R,J)=[E(\Phi,R,I),E(\Phi,R,J)]. $$
\par\smallskip
$\bullet$ Finally, let us mention that it seems that there is a
close connection between the relative commutator subgroups and
excision kernels. We refer to \cite{yoga2} for the references on
multiple relativisation and excision kernels, and some further
discussion.
\par\smallskip
In the present paper we considered only the usual relative groups,
defined in terms of a {\it single\/} ideal. Actually, for multiply
laced systems
relative subgroups should be parametrised not by the usual ideals,
but by {\it admissible pairs\/} $(A,B)$, as defined by Eiichi Abe
\cite{abe69}. In such a pair, the ideal $A$ parametrises elementary
generators for short roots $\Phi_s$, whereas the {\it additive
subgroup\/} $B$ parametrises elementary generators for long roots
$\Phi_l$. Moreover, $A_p\le B\le A$, where $p=2$ for
$\Phi=\B_l,\C_l,\F_4$ and $p=3$ for $\Phi=\G_2$. Here $A_p$ is
the ideal generated by $p\xi$ and $\xi^p$, for all $\xi\in A$.
In all non-symplectic cases $B$ is also an ideal of $R$, but in
the symplectic case it is a {\it Jordan ideal\/}. For classical
groups, admissible pairs are precisely the special case of form
ideals as defined the same year by Anthony Bak.
\par
In the non simply laced case, the genuine relative elementary
subgroups, which occur in the classification of normal subgroups
of $G(\Phi,R)$, are para\-met\-rised by admissible pairs, rather
than individual ideals, and are defined as follows:
$$ E(\Phi,R,A,B)={\left\langle x_{\alpha}(\xi),\alpha\in\Phi_s,\xi\in
A;\ x_{\beta}(\zeta),\beta\in\Phi_l,\zeta\in
B\right\rangle}^{E(\Phi,R)}. $$
\noindent
There is an analogue of Theorem~\ref{The:1} in this case, obtained by Michael
Stein \cite{stein2}, and explicitly stated by Eiichi Abe \cite{abe89}.

\begin{The}[{Stein---Abe}]\label{The:7}
Let $\Phi$ be a reduced irreducible root system of rank $\ge 2$
and let $I$ be an ideal of a commutative ring $R$. Then
as a group\/ $E(\Phi,R,A,B)$ is generated by the elements of
the form $z_{\alpha}(\xi,\eta)$, where\/ $\xi\in A$ for
$\alpha\in\Phi_s$ and $\xi\in B$ for $\alpha\in\Phi_l$,
while\/ $\eta\in R$.
\end{The}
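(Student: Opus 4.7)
The plan is to adapt the classical argument proving the Stein--Tits--Vaserstein theorem (Theorem~\ref{The:1}) to the admissible-pair setting. Write $H$ for the subgroup of $E(\Phi,R)$ generated by the elements $z_{\alpha}(\xi,\eta)$ listed in the theorem. Each such generator is by construction a conjugate in $E(\Phi,R)$ of an admissible elementary generator $x_{\alpha}(\xi)$, so $H\le E(\Phi,R,A,B)$ is automatic. For the reverse inclusion, recall that $E(\Phi,R,A,B)$ is the normal closure of the admissible elementary generators in $E(\Phi,R)$, and each of these already lies in $H$ via $x_{\alpha}(\xi)=z_{\alpha}(\xi,0)$; it therefore suffices to show that $H$ is normalised by $E(\Phi,R)$.

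Since $E(\Phi,R)$ is generated by root unipotents, induction on word length reduces the normality of $H$ to the assertion that $x_{\gamma}(\eta')\cdot h\cdot x_{\gamma}(-\eta')\in H$ for each generator $h=z_{\alpha}(\xi,\eta)$ of $H$ and each $(\gamma,\eta')\in\Phi\times R$. Using the identity $gxg^{-1}=[g,x]\cdot x$ and the expansion $z_{\alpha}(\xi,\eta)=[x_{-\alpha}(\eta),x_{\alpha}(\xi)]\cdot x_{\alpha}(\xi)$, the task collapses to showing that commutators $[x_{\gamma}(\eta'),x_{\alpha}(\xi)]$ lie in $H$ whenever $(\alpha,\xi)$ is admissible, since admissible $x_{\alpha}(\xi)$ is already in $H$ and the identity can be iterated.

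The verification of these commutator inclusions proceeds by case analysis on the mutual position of $\gamma$ and $\alpha$. If the two roots coincide or are strictly orthogonal (i.e., neither $\gamma+\alpha$ nor $\gamma-\alpha$ lies in $\Phi$), the commutator is trivial by the Steinberg relations; if they are opposite, one has $[x_{-\alpha}(\eta'),x_{\alpha}(\xi)]=z_{\alpha}(\xi,\eta')\cdot x_{\alpha}(-\xi)$, visibly a product of elements of $H$. In the remaining case the two roots span an irreducible rank~$2$ subsystem $\Delta\subseteq\Phi$ of type $\A_2$, $\B_2$, or $\G_2$, and the Chevalley commutator formula expresses the commutator as a product $\prod x_{i\gamma+j\alpha}(N_{\gamma\alpha ij}(\eta')^{i}\xi^{j})$ of elementary generators at the roots $i\gamma+j\alpha\in\Delta$. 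Each factor must then be recognised as an admissible elementary generator, and hence as an element of $H$.

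The hard part is exactly this rank~$2$ bookkeeping, and it is here that the admissibility relations $A_{p}\subseteq B\subseteq A$ (with $p=2$ for $\B_l,\C_l,\F_4$ and $p=3$ for $\G_2$), together with the Jordan-ideal structure of $B$ in the symplectic case, earn their keep. The delicate subcases arise when a commutator of a short-root element with parameter $\xi\in A$ produces a factor in a long-root direction: the coefficient has the shape $(\eta')^{i}\xi^{j}$ with $j\ge p$, and the containment $A_{p}\subseteq B$ is tailored precisely so that $\xi^{j}\in B$, while the Jordan-ideal axioms for $B$ guarantee that multiplication by the auxiliary scalar $(\eta')^{i}\in R$ keeps the coefficient inside $B$ even when $B$ fails to be a full two-sided ideal of $R$. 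The symmetric issue --- long-root input producing a short-root factor --- is handled immediately by $B\subseteq A$. Once all rank~$2$ subcases in $\A_2$, $\B_2$, and $\G_2$ are dispatched by this bookkeeping, the theorem follows from the formal reduction described above.
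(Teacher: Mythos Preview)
The paper does not prove Theorem~\ref{The:7}; it is quoted in \S\ref{sec:5} as a known result of Stein \cite{stein2} and Abe \cite{abe89}, with no argument supplied. So there is no proof in the paper to compare your sketch against, and it has to be judged on its own.

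Your overall strategy --- show that the subgroup $H$ generated by the $z_\alpha(\xi,\eta)$ is normal in $E(\Phi,R)$ --- is the standard one, and your rank-$2$ admissibility bookkeeping (the r\^ole of $A_p\subseteq B\subseteq A$ and of the Jordan-ideal condition in the symplectic case) is correctly identified. The gap is in the reduction step. You assert that it suffices to show $[x_\gamma(\eta'),x_\alpha(\xi)]\in H$ for admissible $(\alpha,\xi)$, on the grounds that $z_\alpha(\xi,\eta)=[x_{-\alpha}(\eta),x_\alpha(\xi)]\cdot x_\alpha(\xi)$ and ``the identity can be iterated''. But conjugating $z_\alpha(\xi,\eta)$ by $x_\gamma(\eta')$ and distributing gives
\[
{}^{x_\gamma(\eta')}z_\alpha(\xi,\eta)=
\bigl[{}^{x_\gamma(\eta')}x_{-\alpha}(\eta),\ {}^{x_\gamma(\eta')}x_\alpha(\xi)\bigr]\cdot{}^{x_\gamma(\eta')}x_\alpha(\xi),
\]
and while the right-hand factor lands in $H$ by your hypothesis, the commutator on the left has first entry ${}^{x_\gamma(\eta')}x_{-\alpha}(\eta)$ with $\eta\in R$ arbitrary --- an element of $E(\Phi,R)$ with no reason to normalise $H$. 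Unwinding further just reproduces the original problem; the iteration is circular. Put differently: your case analysis handles conjugating an admissible $x_\alpha(\xi)$ by one elementary generator, but the moment the case $\gamma=-\alpha$ turns it into a $z_\alpha(\xi,\theta)$, the next conjugation must be applied to a $z$-generator, and you are back where you started.

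The case your outline does not touch is ${}^{x_\alpha(\theta)}z_\alpha(\xi,\eta)$, and this is precisely where the hypothesis $\rk(\Phi)\ge 2$ enters in the genuine proofs. One cannot resolve it inside $\langle X_\alpha,X_{-\alpha}\rangle$; instead one chooses an auxiliary root $\beta$ so that $\alpha,\beta$ span an irreducible rank-$2$ subsystem and uses the Chevalley commutator formula to route the computation through roots other than $\pm\alpha$. Without that manoeuvre the argument would go through verbatim in rank~$1$, where the conclusion is false.
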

Thus, the following problem naturally suggests itself.
\begin{Prob}
Generalise Theorems~$\ref{The:2}$ and $\ref{The:3}$ to the case
of the relative commutator subgroups
$\big[E(\Phi,R,A,B),E(\Phi,R,C,D)\big]$,
for two admissible pairs $(A,B)$ and $(C,D)$.
\end{Prob}

Actually, in the case, where $\Phi=\B_2,\G_2$ and the ring
has residue fields ${\Bbb F}_{\!2}$ of two elements,
the situation is even more complicated. Douglas Costa and
Gordon Keller studied the normal structure of $\Sp(4,R)$
and $G_2(R)$ in their remarkable papers \cite{costakeller1,costakeller2}
and discovered that in these cases relative subgroups should
be parametrised by {\it radices\/}, rather than by individual
ideals, or admissible pairs.

\begin{Prob}
For the cases $\Phi=\B_2$ and $\Phi=\G_2$ generalise
Theorems~$\ref{The:2}$ and $\ref{The:3}$ to the relative commutators
of elementary subgroups, defined in terms of radices.
\end{Prob}

We refer the interested reader to our papers cited above,
and  to our conference papers \cite{yoga,yoga2,portocesareo,RNZ4},
where one can find many further details and unsolved problems
concerning relative commutator subgroups.

{\bf Acknowledgement.}
We would like to thank Alexander Shchegolev who carefully read our original manuscript and
proposed several corrections.
The work of the second author is supported by the RFFI research projects 11-01-00756
(RGPU) and 12-01-00947 (POMI) and by the State Financed research task 6.38.74.2011
at the Saint Petersburg State University ``Structure theory and geometry of algebraic
groups and their applications in representation theory and algebraic K-theory''.


\end{document}